\documentclass[preprint,12pt]{elsarticle}
\usepackage[a4paper,margin=1in]{geometry}
\usepackage[T1]{fontenc}
\usepackage[utf8]{inputenc}
\usepackage{lmodern}
\usepackage{amsmath,amssymb,amsfonts,amsthm,mathtools}
\usepackage{enumitem}

\usepackage[colorlinks=true,linkcolor=blue,citecolor=blue,urlcolor=blue]{hyperref}

\numberwithin{equation}{section}
\newtheorem{definition}{Definition}[section]
\newtheorem{remark}{Remark}[section]
\newtheorem{proposition}{Proposition}[section]
\newtheorem{lemma}{Lemma}[section]
\newtheorem{theorem}{Theorem}[section]
\newtheorem{corollary}{Corollary}[section]
\newtheorem{assumption}{Assumption}

\newcommand{\R}{\mathbb{R}}
\newcommand{\Rp}{\mathbb{R}_{+}}
\newcommand{\cF}{\mathcal{F}}
\newcommand{\cL}{\mathcal{L}}
\newcommand{\cP}{\mathcal{P}}
\newcommand{\cU}{\mathcal{U}}
\newcommand{\cR}{\mathcal{R}}
\newcommand{\E}{\mathbb{E}}
\newcommand{\Prob}{\mathbb{P}}
\newcommand{\1}{\mathbf{1}}
\newcommand{\wt}{\widetilde}

\begin{document}
	\begin{frontmatter}
		\title{Optimal control problem for reflected McKean--Vlasov stochastic differential equations with Poisson jumps}
		\author
              {Wenrui Lu}
         \ead{lwrstchhh@163.com}   	
      \author
          {Kai Wang\corref{cor1}}
\ead{wangkai050318@163.com}
		\affiliation
           {
			organization={School of Mathematics and Statistics, Anhui University of Finance and Economics},
			city={Bengbu 233030},
			state={Anhui},
			country={China}
	     	}
\cortext[cor1]{Corresponding author.}

\begin{abstract}
In this paper, we consider the optimal relaxed control problem for a class of one-dimensional reflected McKean--Vlasov stochastic differential equations with Poisson jumps. Due to the presence of the jump term, the state process generally belongs to the Skorokhod space $D([0,T],\Rp)$, which makes the proof of tightness and the passage to the limit more complicated. Under Lipschitz conditions and suitable growth conditions, we establish uniform moment estimates for the state process and the reflecting process. Then, by using Aldous' tightness criterion, the continuity of the Skorokhod map, and the stability results for stochastic integrals, we prove the existence of an optimal relaxed control. Furthermore, under the Roxin convexity condition, we prove the existence of a strict optimal control. In the general case, we show that relaxed controls can be approximated by a sequence of strict controls.
\end{abstract}

\begin{keyword}
	Reflected McKean--Vlasov equations; Poisson jumps; Relaxed control; Skorokhod map; Optimal control
\MSC[2020] 60H10 \sep 60H30 \sep 49J15 \sep 93E20
\end{keyword}

\end{frontmatter}
\section{Introduction}
McKean--Vlasov stochastic differential equations are stochastic dynamical systems whose coefficients depend on the distribution of the state process. They are also called distribution dependent stochastic differential equations or mean-field type stochastic differential equations. The idea is related to Vlasov's work on the mean-field theory of particle systems \cite{Vlasov1938}, while their probabilistic formulation is usually traced back to McKean's pioneering study on the relationship between nonlinear parabolic equations and Markov processes \cite{McKean1966}. In such models, the evolution of a single particle depends not only on its own state, but also on the probability distribution of the whole system. From the viewpoint of particle systems, McKean--Vlasov equations can be regarded as limiting models of weakly interacting particle systems as the number of particles tends to infinity, and are closely related to the theory of propagation of chaos \cite{Sznitman1991}. Throughout the sequel, we use the term McKean--Vlasov stochastic differential equations.

In recent years, McKean--Vlasov stochastic differential equations have been extensively studied in connection with well-posedness of solutions, particle-system approximation, propagation of chaos, nonlinear Fokker--Planck equations, and related partial differential equations. Many representative results have been obtained on the limits of weakly interacting particle systems, propagation of chaos, and well-posedness of McKean--Vlasov equations \cite{Sznitman1991,Buckdahn2017,Mishura2020}.

It is worth noting that Feng-Yu Wang and his collaborators have carried out a systematic series of studies in this field \cite{Wang2018,HuangWang2019,RenWang2019,HuangRenWang2021,HuangWang2022,HuangSongWang2022,Wang2023}, where the relevant equations are often formulated as distribution dependent stochastic differential equations. For example, Wang \cite{Wang2018} studied distribution dependent stochastic differential equations associated with Landau-type equations and established results on well-posedness, exponential contraction, and Harnack inequalities. Huang, Ren and Wang \cite{HuangRenWang2021} gave a systematic summary of several important developments on distribution dependent stochastic differential equations. Wang \cite{Wang2023} further investigated well-posedness and functional inequalities for distribution dependent reflecting stochastic differential equations. These works show that McKean--Vlasov stochastic differential equations have formed a rich theoretical framework and provide a theoretical basis for further study of reflected state-constrained problems and jump-type settings.

In the direction of optimal control, McKean--Vlasov systems have also attracted considerable attention. Since the coefficients depend on the state distribution, the control affects not only the evolution of the individual state, but also the overall dynamics of the system through the distributional term; hence the corresponding problem differs from classical stochastic control problems. Agram and {\O}ksendal \cite{AgramOksendal2023} studied optimal control problems for conditional McKean--Vlasov jump diffusions, and constructed a Markovian system that can be used for dynamic programming and Hamilton--Jacobi--Bellman (HJB) analysis through the stochastic Fokker--Planck equation satisfied by the conditional distribution. Agram, Pucci and {\O}ksendal \cite{AgramPucciOksendal2024} further studied impulse control problems for conditional McKean--Vlasov jump diffusions, and Agram and {\O}ksendal \cite{AgramOksendal2024} discussed the corresponding optimal stopping problem. In addition, the control theory of McKean--Vlasov systems includes regime-switching control \cite{ZhangLi2025}, convergence-rate analysis from finite-particle systems to McKean--Vlasov control problems \cite{Cardaliaguet2023}, and the relationship between mean-field control and mean-field games \cite{Carmona2016,Lacker2017}. These studies indicate that McKean--Vlasov control problems have become an important direction in stochastic control and mean-field theory.

On the other hand, reflected stochastic differential equations provide a natural tool for describing state-constrained systems. In many practical problems, the state process is subject to nonnegativity or domain constraints. For instance, risk reserves, inventory levels, population sizes, and workloads in queueing systems cannot cross a prescribed boundary arbitrarily. The role of the reflecting term is to exert a minimal regulation when the state process reaches the boundary, so that it remains in the admissible region. For reflected McKean--Vlasov control problems, Ma, Sun and Han \cite{MaSunHan2024} studied a stochastic control problem for one-dimensional reflected McKean--Vlasov SDEs and characterized the corresponding value function by a backward stochastic partial differential equation. Shao \cite{Shao2025} studied the optimal control problem for multidimensional reflected McKean--Vlasov SDEs and established the corresponding HJB equation on the Wasserstein space. For reflected McKean--Vlasov equations with jumps, Jarni, Missaoui and Ouknine \cite{Jarni2024} studied existence and uniqueness, stability, and propagation of chaos for the equation in time-dependent domains.

Existing studies have advanced reflected, jump, and McKean--Vlasov control problems from different perspectives. For example, Laayoun and Missaoui \cite{LaayounMissaoui2026} used the compactification method to prove the existence of an optimal relaxed control for one-dimensional reflected McKean--Vlasov SDEs. Under the Roxin convexity condition, they further proved that the optimal relaxed control can be attained by a strict control, and also showed that relaxed controls can be approximated by a sequence of strict controls. That work treats the continuous-path case driven by Brownian motion, where the proof of tightness and the passage to the limit are mainly carried out in the continuous path space. Jarni, Missaoui and Ouknine \cite{Jarni2024} studied well-posedness for reflected McKean--Vlasov equations with jumps, but did not involve optimal control. It follows that the existence of optimal controls for systems simultaneously involving reflection constraints, Poisson jump perturbations, and a relaxed control structure still requires further study.

Reflected McKean--Vlasov stochastic differential equations with jumps can simultaneously describe three mechanisms: the McKean--Vlasov structure captures mean-field interactions between individual states and the global distribution; the reflecting term captures nonnegativity or domain constraints; and the jump term models abrupt shocks or discontinuous perturbations. Compared with continuous diffusion models, models with jumps can more naturally describe abrupt information shocks in financial markets, large claims in insurance risk, batch arrivals in queueing networks, and external shocks in engineering systems. Because of the jump term, the state process generally no longer belongs to the space $C([0,T],\Rp)$ of continuous functions, but instead belongs to the Skorokhod space $D([0,T],\Rp)$. Therefore, the tightness arguments and passage-to-the-limit methods used in the continuous-path case cannot be applied verbatim; one has to handle tightness in the Skorokhod path space, the continuity of the reflecting map, and the stability of stochastic integrals.

Motivated by the above background, this paper studies the existence of optimal relaxed controls for a class of one-dimensional reflected McKean--Vlasov stochastic differential equations with Poisson jumps. We adopt a model setting in which the control variable acts on the drift term, and within this framework we analyze the influence of the jump term on the proof of existence for reflected McKean--Vlasov optimal controls. This setting preserves the basic averaging structure of the relaxed-control method and allows us to focus on the effect of Poisson jumps on tightness and the passage to the limit. The rationale for allowing the control to enter only the drift term will be discussed in the model formulation.

The main results of this paper are as follows. First, under suitable Lipschitz and growth conditions, we verify that the controlled reflected McKean--Vlasov state equation with jumps is well posed, by applying existing well-posedness results after averaging the drift with respect to the relaxed control, and we establish uniform moment estimates for the state process and the reflecting process. Second, by combining Aldous' tightness criterion with the continuity of the one-dimensional Skorokhod map, we prove the joint tightness of a minimizing sequence in the relaxed-control framework, and then obtain the existence of an optimal relaxed control. Third, by using estimates for compensated Poisson stochastic integrals and stability results for stochastic integrals with jumps, we complete the passage to the limit for the jump term, the distribution dependent term, and the relaxed control measure. Finally, under the Roxin convexity condition, we prove that the optimal relaxed control can be attained by a strict control, thereby obtaining the existence of a strict optimal control. In the general case, we show that relaxed controls can still be approximated by a sequence of strict controls, and consequently the strict control problem and the relaxed control problem have the same value. These results may be viewed as a jump extension of the existence theory for optimal controls of continuous-path reflected McKean--Vlasov systems.

The remainder of this paper is organized as follows. Section 2 introduces the preliminaries and the problem formulation, including the reflected problem, Poisson random measures, strict controls, relaxed controls, and the main assumptions. Section 3 presents the main results and their proofs, including well-posedness of the state equation, uniform moment estimates and tightness, the existence of an optimal relaxed control, and the relationship between strict controls and relaxed controls.

\section{Preliminaries and Problem Formulation}
Let $T>0$ be a fixed terminal time. Denote by
\(D([0,T],\Rp)\) the Skorokhod space consisting of all càdlàg functions from $[0,T]$ to $\Rp$, i.e., functions that are right-continuous and possess finite left limits at every point. This paper is concerned with reflected McKean--Vlasov stochastic differential equations with jumps. As their state processes are generally discontinuous, we conduct our analysis of the state process in the Skorokhod space $D([0,T],\Rp)$.

Let $A$ be a compact metric space, representing the set of control values, and $x\in\Rp$ be a deterministic initial state. All state processes in this paper start from the initial value $X_0=x$. 
Denote 
by $\cP_2(\Rp)$ the space of probability measures on $\Rp$ with finite second moment.

For $\mu,\eta\in \cP_2(\Rp)$, the second-order Wasserstein distance is defined by
\begin{equation}\label{eq:W2}
	W_2(\mu,\eta)
	=
	\left(
	\inf_{\pi\in\Pi(\mu,\eta)}
	\int_{\mathbb R_+\times\mathbb R_+}|x-y|^2\pi(dx,dy)
	\right)^{1/2}.
\end{equation}
where $\Pi(\mu,\eta)$ denotes the set of all probability measures with marginal distributions $\mu$ and $\eta$.

\subsection{The Skorokhod Reflection Problem}
\begin{definition}[One-dimensional Skorokhod reflection problem \cite{Skorokhod1961}]
Let $T>0$, and
\[
Y=(Y_t)_{t\in[0,T]}\in D([0,T],\R)
\]
be a given càdlàg function. If there exists a pair of processes
\[
(X,K)=(X_t,K_t)_{t\in[0,T]}\in D([0,T],\Rp)\times D([0,T],\Rp)
\]
satisfying the following conditions:

\begin{enumerate}[label=\arabic*.]
\item For every $t\in[0,T]$,
$
X_t=Y_t+K_t;
$
\item For every $t\in[0,T]$,
$
X_t\ge 0;
$
\item $K$ is a nondecreasing càdlàg process with
$
K_0=0;
$
and satisfies the minimal reflection condition
\[
\int_0^T \1_{\{X_s>0\}}\,dK_s=0,
\]
\end{enumerate}
then $(X,K)$ is called the Skorokhod reflection solution of $Y$ on the domain $[0,\infty)$.
\end{definition}

\begin{remark}
The process $K$ is called the reflecting process or the regulating process. The condition
\[
\int_0^T \1_{\{X_s>0\}}\,dK_s=0
\]
means that $K$ increases only when the state process $X$ is at the boundary $0$, thereby ensuring the minimality of the reflection.
\end{remark}

\subsection{Probability Space and Jump Noise}
Let $(\Omega,\mathcal F,\{\mathcal F_t\}_{t\in[0,T]},\mathbb P)$ be a filtered
probability space satisfying the usual conditions. Let
$B=(B_t)_{t\in[0,T]}$ be an $m$-dimensional standard Brownian motion with respect
to $\{\mathcal F_t\}_{t\in[0,T]}$. Let $N(dt,dz)$ be a Poisson random measure on
$[0,T]\times R_0$, independent of $B$, with compensator $\nu(dz)\,dt$ with respect
to $\{\mathcal F_t\}_{t\in[0,T]}$, where
$R_0:=\mathbb R\setminus\{0\}.$
Here $\nu(dz)$ is a L\'evy measure on $R_0$. The corresponding compensated
Poisson random measure is denoted by
\[
\widetilde N(dt,dz)=N(dt,dz)-\nu(dz)\,dt.
\]

In many stochastic control models, the drift term usually corresponds to an adjustable strategy, input or intervention intensity, whereas Brownian noise and Poisson jump noise more often reflect environmental uncertainty and external shocks, which the decision maker typically cannot directly intervene in.
Furthermore, from the viewpoint of relaxed control methods, allowing the control to enter only the drift term keeps the relaxed state equation in a well-defined mean structure. Specifically, once the strict control $u_t$ is replaced by a relaxed control measure $q_t(da)$, the drift term can be naturally written as
\[
\int_A b(t,x,\mu,a)q_t(da).
\]
If the control variable further enters the diffusion term or the jump term, the passage to the limit for the stochastic integral terms under relaxed controls is no longer merely an averaging over the control variable. Instead, one has to handle controlled quadratic variations, controlled jump measures, or corresponding stronger convexity conditions, which would significantly change the technical structure of the problem. Since this paper mainly focuses on the influence of the Poisson jump term on the proof of existence for reflected McKean--Vlasov optimal controls, especially tightness in the Skorokhod space, continuity of the reflecting map, and passage to the limit for compensated Poisson stochastic integrals, we adopt a model setting in which the control acts only on the drift term, in order not to obscure the core structure of the reflected mean-field jump system that this paper aims to reveal. That is, the control process affects the average evolutionary trend of the system state, while the diffusion term and the jump term describe external continuous random perturbations and sudden random shocks, respectively, whose intensities are not directly determined by the control variable.

Therefore, the drift coefficient, diffusion coefficient, and jump coefficient are written as
\[
b=b(t,x,\mu,a),\quad \sigma=\sigma(t,x,\mu),\quad \gamma=\gamma(t,x,\mu,z),
\]
where
\[
(t,x,\mu,a,z)\in [0,T]\times\Rp\times\cP_2(\Rp)\times A\times R_0.
\]

\subsection{Strict Controls}
\begin{definition}[Strict control]
A tuple $\alpha=(\Omega,\cF,\{\cF_t\}_{t\in[0,T]},\Prob,B,N,u,X,K)$ is called an admissible strict control if the following conditions hold:
\begin{enumerate}[label=\arabic*.]
\item$u=(u_t)_{t\in[0,T]}$ is an $A$-valued $\{\mathcal F_t\}_{t\in[0,T]}$-progressively measurable process;
\item $B=(B_t)_{t\in[0,T]}$ is an $m$-dimensional standard Brownian motion
with respect to $\{\mathcal F_t\}_{t\in[0,T]}$, and $N(dt,dz)$ is a Poisson
random measure on $[0,T]\times R_0$, independent of $B$, with
$\{\mathcal F_t\}_{t\in[0,T]}$-compensator $\nu(dz)\,dt$. The compensated
Poisson random measure is denoted by
$
\widetilde N(dt,dz)=N(dt,dz)-\nu(dz)\,dt;
$
\item $X$ and $K$ are $\{\cF_t\}$-adapted càdlàg processes, and $X,K\in D([0,T],\Rp)$;
\item Under the strict control $u=(u_t)_{t\in[0,T]}$, the reflected McKean--Vlasov stochastic differential equation with jumps considered in this paper can be formally written as
\[
\begin{cases}
 dX_t=b(t,X_t,\cL_{X_t},u_t)\,dt+\sigma(t,X_t,\cL_{X_t})\,dB_t \\
 \displaystyle\qquad\quad +\int_{R_0}\gamma(t,X_{t-},\cL_{X_{t-}},z)\,\widetilde{N}(dt,dz)+dK_t,\quad t\in[0,T],\\
 X_0=x,\quad X_t\ge 0,\quad t\in[0,T],\\
 \displaystyle K_0=0,\quad K\text{ is nondecreasing},\quad \int_0^T \1_{\{X_t>0\}}\,dK_t=0;
\end{cases}
\]
where $\cL_{X_t}$ denotes the distribution of the random variable $X_t$, $\widetilde{N}(dt,dz)$ is the compensated Poisson random measure, and $K$ is the reflecting process. 
\end{enumerate}
Equivalently, for every $t\in[0,T]$, the above equation can be written in integral form as
\begin{equation}\label{eq:strict-state}
	\begin{aligned}
		X_t
		=&\,x+\int_0^t b(s,X_s,\mathcal L_{X_s},u_s)\,ds
		+\int_0^t \sigma(s,X_s,\mathcal L_{X_s})\,dB_s  \\
		&+\int_0^t\int_{\mathbb R_0}
		\gamma(s,X_{s-},\mathcal L_{X_{s-}},z)\,\widetilde N(ds,dz)
		+K_t .
	\end{aligned}
\end{equation}
All admissible strict controls are denoted by $\cU$.
\end{definition}

For a strict control $\alpha\in\cU$, define the cost functional by
\begin{equation}\label{eq:strict-cost}
	J(\alpha)
	=
	\mathbb E\left[
	\int_0^T f(s,X_s,\mathcal L_{X_s},u_s)\,ds
	+\int_0^T c(s,X_s,\mathcal L_{X_s})\,dK_s
	+g(X_T,\mathcal L_{X_T})
	\right].
\end{equation}
The objective of the strict control problem is to find $\alpha^*\in\cU$ such that
\begin{equation}\label{eq:strict-value}
	J(\alpha^*)=\inf_{\alpha\in\mathcal U}J(\alpha).
\end{equation}

\subsection{Basic Assumptions}
To guarantee well-posedness, uniform moment estimates, and the subsequent proof of tightness for the reflected McKean--Vlasov stochastic differential equation with jumps, we impose the following assumptions.

\begin{assumption}[Control space and jump measure]
The control space $A$ is a compact metric space. Let $N(dt,dz)$ be a Poisson random measure defined on $[0,T]\times R_0$, and let its compensated Poisson random measure be $\widetilde{N}(dt,dz)=N(dt,dz)-\nu(dz)dt$, where $\nu(dz)$ is a Lévy measure on $R_0$ satisfying
\[
\int_{R_0}(1\wedge |z|^2)\nu(dz)<\infty.
\]
\end{assumption}

\begin{assumption}[Continuity and Lipschitz conditions on the coefficients]
The drift coefficient, diffusion coefficient, and jump coefficient are respectively
\[
b:[0,T]\times\Rp\times\cP_2(\Rp)\times A\to\R,
\]
\[
\sigma:[0,T]\times\Rp\times\cP_2(\Rp)\to\R^m,
\]
and
\[
\gamma:[0,T]\times\Rp\times\cP_2(\Rp)\times R_0\to\R.
\]
Here $b$ is continuous in $(t,x,\mu,a)$, $\sigma$ is continuous in $(t,x,\mu)$, and $\gamma$ is continuous in $(t,x,\mu)$ in the sense of $L^2(R_0,\nu)$.

Moreover, there exists a constant $C>0$ such that, for all $t\in[0,T]$, $a\in A$, $x,x'\in\Rp$, and $\mu,\mu'\in\cP_2(\Rp)$,
\[
\begin{aligned}
&|b(t,x,\mu,a)-b(t,x',\mu',a)|+|\sigma(t,x,\mu)-\sigma(t,x',\mu')| \\
&\quad +\left(\int_{R_0}|\gamma(t,x,\mu,z)-\gamma(t,x',\mu',z)|^2\nu(dz)\right)^{1/2}
\le C\bigl(|x-x'|+W_2(\mu,\mu')\bigr).
\end{aligned}
\]
\end{assumption}

\begin{assumption}[Growth conditions]
There exists a constant $C>0$ such that, for every $(t,x,\mu,a)\in [0,T]\times\Rp\times\cP_2(\Rp)\times A$,
\[
|b(t,x,\mu,a)|^2+|\sigma(t,x,\mu)|^2\le C\left[1+|x|^2+\int_{\Rp}|y|^2\mu(dy)\right].
\]
Moreover, the jump coefficient satisfies
\[
\left(\int_{R_0}|\gamma(t,x,\mu,z)|^2\nu(dz)\right)^2+
\int_{R_0}|\gamma(t,x,\mu,z)|^4\nu(dz)
\le C\left[1+|x|^4+\left(\int_{\Rp}|y|^2\mu(dy)\right)^2\right].
\]
\end{assumption}

\begin{assumption}[Conditions on the cost functions]
The cost functions
\[
f:[0,T]\times\Rp\times\cP_2(\Rp)\times A\to\R,
\]
\[
c:[0,T]\times\Rp\times\cP_2(\Rp)\to\R,
\]
and
\[
g:\Rp\times\cP_2(\Rp)\to\R
\]
are all continuous functions. Moreover, there exists a constant $C>0$ such that, for every $(t,x,\mu,a)\in[0,T]\times\Rp\times\cP_2(\Rp)\times A$,
\[
|f(t,x,\mu,a)|+|g(x,\mu)|\le C\left[1+|x|^2+\int_{\Rp}|y|^2\mu(dy)\right],
\]
and
\[
|c(t,x,\mu)|\le C\left[1+|x|+\left(\int_{\Rp}|y|^2\mu(dy)\right)^{1/2}\right].
\]
In particular, assume that the reflecting cost term is lower semicontinuous under the mode of convergence adopted in this paper. That is, if
\[
(X^n,K^n)\xrightarrow[n\to\infty]{D}(X,K)
\quad\text{in }D([0,T],\Rp)\times D([0,T],\Rp),
\]
and the corresponding moment estimates hold uniformly, then
\[
\E\left[\int_0^T c(s,X_s,\cL_{X_s})\,dK_s\right]
\le \liminf_{n\to\infty}\E\left[\int_0^T c(s,X_s^n,\cL_{X_s^n})\,dK_s^n\right].
\]
Furthermore, assume that the reflecting cost term is continuous along the reflecting process in the sense of uniform mean-square convergence. That is, if
\[
\E\left[\sup_{0\le t\le T}|X_t^n-X_t|^2+\sup_{0\le t\le T}|K_t^n-K_t|^2\right]\xrightarrow[]{n\to\infty}0,
\]
then
\[
\E\left[\int_0^T c(s,X_s^n,\cL_{X_s^n})\,dK_s^n\right]
\xrightarrow[]{n\to\infty}
\E\left[\int_0^T c(s,X_s,\cL_{X_s})\,dK_s\right].
\]
\end{assumption}

\begin{assumption}[Roxin convexity condition]
For every
\[
(t,x,\mu)\in[0,T]\times\Rp\times\cP_2(\Rp),
\]
the set
\[
S(t,x,\mu)=\{(b(t,x,\mu,a),f(t,x,\mu,a)):a\in A\}
\]
is a closed convex subset of $\R\times\R$.
\end{assumption}

\subsection{Relaxed Controls}
The basic idea of relaxed controls is to extend the point-valued control $u_t\in A$ to the probability measure $q_t(da)$ on $A$, that is, to allow the controller to choose a probability distribution at each time instead of a deterministic control value. For related ideas, see \cite{LaayounMissaoui2026,ElKarouiPengQuenez1997}.

Let $A$ be a compact metric space, and let $\cP(A)$ be the space of all probability measures on $A$, endowed with the topology of weak convergence. Let $\mathcal V\ $be the set of all nonnegative measures $q$ on $[0,T]\times A$ satisfying
\[
q([0,t]\times A)=t,\quad t\in[0,T].
\]
By the measure decomposition theorem, for every\(q\in\mathcal V\), there exists a family of $\cP(A)$-valued measurable maps $(q_t)_{t\in[0,T]}$ such that
\[
q(dt,da)=q_t(da)dt.
\]
Therefore, a relaxed control can be interpreted as a $\cP(A)$-valued process $(q_t)_{t\in[0,T]}$.
After equipping \(\mathcal V\) with the topology of stable convergence,\(\mathcal V\) is compact and metrizable \cite{JacodMemin1981}.
In particular, every strict control $u=(u_t)_{t\in[0,T]}$ can be naturally embedded into the relaxed control space.

\begin{definition}[Relaxed control]
A tuple $r=(\Omega,\cF,\{\cF_t\}_{t\in[0,T]},\Prob,B,N,q,X,K)$ is called an admissible relaxed control if it satisfies Conditions 2 and 3 in Definition 2.2 and the following conditions:
\begin{enumerate}[label=\arabic*.]
\item  $q=(q_t)_{t\in[0,T]}$ is a $\mathcal P(A)$-valued process
predictable with respect to $\{\mathcal F_t\}_{t\in[0,T]}$.
Equivalently, $q$ may be regarded as a predictable random measure on
$[0,T]\times A$ such that
\[
q(dt,da)=q_t(da)\,dt.
\]
In particular, for every Borel set $C\in\mathcal B(A)$, the process
\[
t\longmapsto q_t(C)
\]
is predictable with respect to $\{\mathcal F_t\}_{t\in[0,T]}$.
\item $K$ is a nondecreasing process with $K_0=0$. Under the relaxed control $q=(q_t)_{t\in[0,T]}$, $(X,K)$ satisfies the following reflected McKean--Vlasov stochastic differential equation with jumps:
\[
\begin{cases}
\displaystyle dX_t=\int_A b(t,X_t,\cL_{X_t},a)q_t(da)\,dt+
\sigma(t,X_t,\cL_{X_t})\,dB_t \\
\displaystyle\quad\qquad +\int_{R_0}\gamma(t,X_{t-},\cL_{X_{t-}},z)\,\widetilde{N}(dt,dz)+dK_t,\quad t\in[0,T],\\
X_0=x,\quad X_t\ge 0,\quad t\in[0,T],\\
\displaystyle K_0=0,\quad K\text{ is nondecreasing},\quad \int_0^T\1_{\{X_t>0\}}\,dK_t=0.
\end{cases}
\]
Equivalently, for every $t\in[0,T]$, the above equation can be written in integral form as
\begin{equation}\label{eq:relaxed-state}
	\begin{aligned}
		X_t
		=&\,x+\int_0^t\int_A b(s,X_s,\mathcal L_{X_s},a)q_s(da)\,ds
		+\int_0^t \sigma(s,X_s,\mathcal L_{X_s})\,dB_s\\
		&+\int_0^t\int_{\mathbb R_0}
		\gamma(s,X_{s-},\mathcal L_{X_{s-}},z)\,\widetilde N(ds,dz)
		+K_t .
	\end{aligned}
\end{equation}
\end{enumerate}
All admissible relaxed controls are denoted by $\cR$.
\end{definition}

For the relaxed control $r\in\cR$, define the cost functional by
\begin{equation}\label{eq:relaxed-cost}
	J(r)
	=
	\mathbb E\left[
	\int_0^T\int_A f(s,X_s,\mathcal L_{X_s},a)q_s(da)\,ds
	+\int_0^T c(s,X_s,\mathcal L_{X_s})\,dK_s
	+g(X_T,\mathcal L_{X_T})
	\right].
\end{equation}
The objective of the relaxed control problem is to find $r^*\in\cR$ such that
\begin{equation}\label{eq:relaxed-value}
	J(r^*)=\inf_{r\in\mathcal R}J(r).
\end{equation}
\subsection{Relationship Between Strict Controls and Relaxed Controls}
If there exists an $A$-valued adapted process $u=(u_t)_{t\in[0,T]}$ such that
\[
q_t(da)=\delta_{u_t}(da),
\]
where $\delta_{u_t}$ denotes the Dirac measure concentrated at $u_t$, then the relaxed control degenerates into a strict control. In this case,
\[
\int_A b(s,X_s,\cL_{X_s},a)q_s(da)=b(s,X_s,\cL_{X_s},u_s),
\]
and
\[
\int_A f(s,X_s,\cL_{X_s},a)q_s(da)=f(s,X_s,\cL_{X_s},u_s).
\]
Therefore, strict controls may be regarded as special cases of relaxed controls.

\subsection{Well-posedness of the State Equation and Well-definedness of the Control Problem}
\begin{proposition}[Well-posedness of the state equation]
Under Assumptions \textnormal{(A.1)}--\textnormal{(A.3)}, for any given relaxed control \(q\in\mathcal V\), the reflected McKean--Vlasov equation with jumps admits a unique adapted càdlàg solution $(X,K)$. In particular, when
\[
q_t(da)=\delta_{u_t}(da),
\]
the state equation under a strict control also admits a unique adapted càdlàg solution.
\end{proposition}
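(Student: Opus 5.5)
The plan is to reduce the relaxed equation to a reflected McKean--Vlasov equation with jumps and an ordinary (random, time-dependent) drift, and then to run a Picard/contraction argument in which the one-dimensional Skorokhod map does the reflection. Fix a predictable relaxed control \(q\). Define the averaged drift
\[
\bar b(t,\omega,x,\mu):=\int_A b(t,x,\mu,a)\,q_t(\omega)(da).
\]
Since \(q_t\) is a probability measure on \(A\), Assumption (A.2) gives
\[
|\bar b(t,x,\mu)-\bar b(t,x',\mu')|\le C\bigl(|x-x'|+W_2(\mu,\mu')\bigr),
\]
and (A.3) gives the same linear growth bound as for \(b\). Joint measurability of \(b\) and predictability of \(q\) make \(\bar b(\cdot,x,\mu)\) predictable for each \((x,\mu)\). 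The problem is therefore a reflected distribution-dependent SDE with jumps and Lipschitz, linearly growing coefficients. I will either invoke the well-posedness result of Jarni--Missaoui--Ouknine \cite{Jarni2024} (the domain \([0,\infty)\) being a trivially time-independent case) or give the direct argument below.

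For the direct argument, let \(\Gamma\) be the Skorokhod map on \([0,\infty)\). It is given explicitly by
\[
K_t=\sup_{s\le t}(Y_s)^-,\qquad X=Y+K,
\]
and it satisfies the Lipschitz estimate
\[
\sup_{s\le t}|\Gamma(Y)_s-\Gamma(Y')_s|\le 2\sup_{s\le t}|Y_s-Y'_s|,
\]
which holds for càdlàg paths as well. Consider the space \(\mathcal S\) of adapted càdlàg \(\Rp\)-valued processes with \(\E\sup_{t\le T}|X_t|^2<\infty\). For \(X\in\mathcal S\), freeze the flow of marginals \(\mu_t=\cL_{X_t}\), which lies in \(\cP_2\). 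Set
\[
Y_t=x+\int_0^t\bar b(s,X_s,\mu_s)\,ds+\int_0^t\sigma(s,X_s,\mu_s)\,dB_s+\int_0^t\int_{R_0}\gamma(s,X_{s-},\mu_{s-},z)\,\widetilde N(ds,dz),
\]
and define \(\Phi(X)=\Gamma(Y)\). The growth conditions together with the Burkholder--Davis--Gundy inequalities (for the compensated integral, via the \(L^2(\nu)\) bound) show that \(\Phi\) maps \(\mathcal S\) into itself. Using \(W_2(\cL_{X_s},\cL_{X'_s})^2\le \E|X_s-X'_s|^2\), the Lipschitz property of \(\Gamma\), Doob/BDG, and the isometry for the \(\widetilde N\)-integral, I obtain
\[
\E\sup_{s\le t}|\Phi(X)_s-\Phi(X')_s|^2\le C_T\int_0^t\E\sup_{r\le s}|X_r-X'_r|^2\,ds .
\]
Iterating this bound gives a factorial estimate \((C_Tt)^n/n!\), so some power of \(\Phi\) is a contraction, or equivalently Picard iteration converges. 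The limit is the unique fixed point, and together with \(K=X-Y\) it solves the relaxed equation. The minimality condition and the monotonicity of \(K\) come from the properties of \(\Gamma\).

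For uniqueness, suppose \((X,K)\) and \((X',K')\) are two solutions. By uniqueness for the deterministic Skorokhod problem, each \(X\) equals \(\Gamma(Y)\) for its own \(Y\). The same estimate plus Gronwall then gives \(X=X'\), and hence \(K=K'\). The strict-control case is the special case \(q_t=\delta_{u_t}\), which is predictable as soon as \(u\) is; I will take a predictable version of the progressively measurable \(u\), which does not change the \(ds\)-integrals.

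The step I expect to be most delicate is the treatment of the jump and left-limit terms. Two points need care there. First, \(\cL_{X_{s-}}\) coincides with \(\cL_{X_s}\) for a.e.\ \(s\) (indeed for all but countably many \(s\), since the solution has no fixed jump times), so the distinction does not affect the \(ds\,\nu(dz)\)-integrals. Second, the Skorokhod map must be Lipschitz in the uniform norm on \(D([0,T],\R)\) in the presence of jumps. I will check the second point directly from the explicit formula \(K_t=\sup_{s\le t}(Y_s)^-\), which involves only suprema and so remains 1-Lipschitz for càdlàg paths.
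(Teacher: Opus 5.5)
Your proposal is correct, and its opening reduction is exactly the paper's proof. The paper defines $\bar b_q(t,x,\mu)=\int_A b(t,x,\mu,a)q_t(da)$, checks by Jensen that it inherits the Lipschitz and growth bounds of $b$, and then invokes the well-posedness theory of Jarni, Missaoui and Ouknine together with the one-dimensional Skorokhod map, treating $q_t=\delta_{u_t}$ as a special case.

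The difference is that you also give a self-contained Picard argument. You freeze $\mu_t=\cL_{X_t}$ and reflect the input with $K_t=\sup_{s\le t}(Y_s)^-$. You then combine the sup-norm $2$-Lipschitz bound of this map with $W_2^2(\cL_{X_s},\cL_{X'_s})\le\E|X_s-X'_s|^2$, Doob/BDG and the $\wt N$-isometry to get a factorial estimate.

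The citation route is shorter. However, it passes over the fact that $\bar b_q$ is random, depending on $\omega$ through $q_t(\omega)$, so a result stated for deterministic coefficients needs a (routine) extension. Your direct argument covers predictable random drifts without change. It also makes explicit the uniform Lipschitz property of the Skorokhod map that the paper later relies on in Lemma~\ref{lem:moment-tightness} and Theorem~\ref{thm:strict-approximation}.

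Two of your ``delicate points'' are lighter than you suggest. First, the left limits need no fixed-jump-time argument, since $W_2^2(\cL_{X_{s-}},\cL_{X'_{s-}})\le\E|X_{s-}-X'_{s-}|^2\le\E\sup_{r\le s}|X_r-X'_r|^2$. Second, a predictable version of $u$ is not needed for well-posedness: $u$ enters only through a $ds$-integral, where progressive measurability suffices.

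Finally, state uniqueness in the class $\E\sup_{t\le T}|X_t|^2<\infty$ (or reach it by localization), since that is where your Gronwall step applies.
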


\begin{proof}
For a given relaxed control \(q\), define the averaged drift coefficient by 
\begin{equation}\label{eq:averaged-drift}
	\bar b_q(t,x,\mu)
	=
	\int_A b(t,x,\mu,a)q_t(da).
\end{equation}
 By the Lipschitz condition on \(b\), for all \(x,x'\in\mathbb R_+\) and \(\mu,\mu'\in\mathcal P_2(\mathbb R_+)\), \[ \begin{aligned} |\bar b_q(t,x,\mu)-\bar b_q(t,x',\mu')| &\leq \int_A |b(t,x,\mu,a)-b(t,x',\mu',a)|q_t(da) \\ &\leq C\bigl(|x-x'|+W_2(\mu,\mu')\bigr). \end{aligned} \] Moreover, by Jensen's inequality and the growth condition on \(b\), \[ |\bar b_q(t,x,\mu)|^2 \leq \int_A |b(t,x,\mu,a)|^2 q_t(da) \leq C\left(1+|x|^2+\int_{\mathbb R_+}|y|^2\mu(dy)\right). \] Hence the averaged drift coefficient \(\bar b_q\) satisfies the same Lipschitz and growth conditions as \(b\), with respect to \((x,\mu)\). Therefore, after the relaxed control \(q\) is fixed, the controlled state equation can be viewed as a reflected McKean--Vlasov SDE with jumps whose drift coefficient, diffusion coefficient and jump coefficient are  \(\bar b_q\), \(\sigma\) and  \(\gamma\), respectively. The equation thus falls within the framework of existing well-posedness results for reflected McKean--Vlasov SDEs with jumps\cite{Jarni2024}. Combining these results with the existence, uniqueness, and continuity of the one-dimensional Skorokhod map yields a unique adapted c\`adl\`ag solution \((X,K)\). When \(q_t(da)=\delta_{u_t}(da)\), the averaged drift reduces to \[ \bar b_q(t,x,\mu)=b(t,x,\mu,u_t), \] and hence the existence and uniqueness in the strict-control case follow as a special case.
\end{proof}

\begin{remark}
The above 
 result implies that, under Assumptions \textnormal{(A.1)}--\textnormal{(A.3)}, the admissible strict control set $\cU$ and the admissible relaxed control set $\cR$ are both nonempty. Indeed, taking any $a_0\in A$ and setting
\[
u_t\equiv a_0,
\quad q_t(da)=\delta_{a_0}(da),
\]
the proposition implies that the corresponding reflected McKean--Vlasov equation with jumps admits a unique adapted càdlàg solution $(X,K)$, thereby yielding an admissible strict control and an admissible relaxed control.

Moreover, under Assumptions \textnormal{(A.1)}--\textnormal{(A.4)}, the moment estimates for the state process and the reflecting process imply that the cost functional $J$ is well-defined. In general, the strict control set need not be compact, and when the corresponding convexity condition fails, a strict optimal control may not exist. Therefore, this paper introduces relaxed controls, extending the control variable from an $A$-valued process to a probability-measure-valued process on $A$, in order to obtain a suitable compactness structure.
\end{remark}

\section{Main Results and Proofs}
To prove the existence of an optimal relaxed control, we first need several auxiliary results concerning tightness of processes. To this end, take a minimizing sequence
\[
r^n=(\Omega^n,\cF^n,\{\cF_t^n\}_{t\in[0,T]},\Prob^n,B^n,N^n,q^n,X^n,K^n)\in\cR
\]
satisfying
\begin{equation}\label{eq:minimizing-sequence}
	\lim_{n\to\infty}J(r^n)=\inf_{r\in\mathcal R}J(r).
\end{equation}
For every $n\ge 1$, the process $(X^n,K^n)$ satisfies
\begin{equation}\label{eq:minimizing-state}
	\begin{aligned}
		X_t^n
		=&\,x+\int_0^t\int_A b(s,X_s^n,\mathcal L_{X_s^n},a)q_s^n(da)\,ds
		+\int_0^t\sigma(s,X_s^n,\mathcal L_{X_s^n})\,dB_s^n\\
		&+\int_0^t\int_{\mathbb R_0}
		\gamma(s,X_{s-}^n,\mathcal L_{X_{s-}^n},z)\,\widetilde N^n(ds,dz)
		+K_t^n ,\quad t\in[0,T],
	\end{aligned}
\end{equation}
and
\[
X_t^n\ge 0,\quad \int_0^T\1_{\{X_s^n>0\}}\,dK_s^n=0.
\]

We first prove uniform moment estimates and tightness for this minimizing sequence. Since this paper considers the case with jumps, the state process generally belongs to the Skorokhod space $D([0,T],\Rp)$, rather than to the space $C([0,T],\Rp)$ of continuous functions.
In what follows, $C$ denotes a generic positive constant which may change from line to line, but is independent of $n$. It may depend on the time horizon $T$, the initial value $x$, and the constants appearing in the assumptions.

\begin{lemma}[Uniform moment estimates and tightness]\label{lem:moment-tightness}
Under Assumptions \textnormal{(A.1)}--\textnormal{(A.3)}, let
\[
r^n=(\Omega^n,\cF^n,\{\cF_t^n\}_{t\in[0,T]},\Prob^n,B^n,N^n,q^n,X^n,K^n)
\]
be a sequence of admissible relaxed controls. Then there exists a constant $C>0$, independent of $n$, such that
\begin{equation}\label{eq:uniform-moment}
	\sup_{n\ge1}
	\mathbb E^{\mathbb P^n}\left[
	\sup_{0\le t\le T}|X_t^n|^4
	+
	\sup_{0\le t\le T}|K_t^n|^4
	\right]\le C.
\end{equation}
Moreover, $(X^n,K^n,B^n,N^n,q^n)$ is tight in the product space
\[
D([0,T],\Rp)\times D([0,T],\Rp)\times C([0,T],\R^m)\times M_p([0,T]\times R_0)\times V.
\]
\end{lemma}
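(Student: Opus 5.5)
The proof has two parts: the moment bound \eqref{eq:uniform-moment}, obtained from the Skorokhod map plus a Gronwall argument, and then tightness of each component separately.

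\textbf{Moment bound.} Write $X^n=Y^n+K^n$, where
\[
Y^n_t=x+\int_0^t\bar b_{q^n}(s,X^n_s,\cL_{X^n_s})\,ds+M^n_t,
\]
and $M^n$ is the sum of the Brownian integral and the compensated Poisson integral. For the one-dimensional Skorokhod problem on $[0,\infty)$ the solution is explicit:
\[
K^n_t=\sup_{s\le t}(-Y^n_s)^+ .
\]
Since $x\ge 0$, this gives $\sup_{s\le t}|K^n_s|\le \sup_{s\le t}|Y^n_s|$ and $\sup_{s\le t}|X^n_s|\le 2\sup_{s\le t}|Y^n_s|$. So it suffices to bound $\phi_n(t):=\E^{\Prob^n}[\sup_{s\le t}|X^n_s|^4]$.

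I will estimate the three terms of $Y^n$ in the fourth moment.
\begin{itemize}
\item Drift: use H\"older and the growth bound on $\bar b_{q^n}$, which the proof of the well-posedness proposition shows is inherited from $b$.
\item Brownian integral: use BDG with exponent $4$ and the growth of $\sigma$.
\item Compensated Poisson integral: use the Kunita-type BDG inequality,
\[
\E\sup_{s\le t}\Big|\int_0^s\!\!\int\gamma\,\widetilde N\Big|^4\le C\,\E\Big[\Big(\int_0^t\!\!\int|\gamma|^2\nu\,ds\Big)^2+\int_0^t\!\!\int|\gamma|^4\nu\,ds\Big].
\]
This is exactly why Assumption (A.3) contains both $(\int|\gamma|^2\nu)^2$ and $\int|\gamma|^4\nu$.
\end{itemize}
Every measure term is bounded by $\big(\int|y|^2\cL_{X^n_s}(dy)\big)^2\le \E|X^n_s|^4\le \phi_n(s)$. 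This yields
\[
\phi_n(t)\le C\Big(1+\int_0^t\phi_n(s)\,ds\Big),
\]
with $C$ independent of $n$. To apply Gronwall I need $\phi_n(t)<\infty$ a priori; I will get this by localizing with stopping times $\tau_R=\inf\{t:|X^n_t|>R\}$ and then letting $R\to\infty$ via Fatou. Gronwall then gives \eqref{eq:uniform-moment}.

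\textbf{Tightness of each component.}
\begin{itemize}
\item $B^n$ has a fixed law on $C([0,T],\R^m)$, so it is tight.
\item $N^n$ has a fixed law on $M_p([0,T]\times R_0)$ (with the vague topology), so it is tight.
\item $q^n$ lives in $\mathcal V$, which is compact, so it is tight.
\end{itemize}
Tightness of the marginals implies tightness of the product. It therefore remains to prove tightness of $Y^n$ in $D([0,T],\R)$ and transfer it to $(X^n,K^n)$.

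\textbf{Tightness of $Y^n$ via Aldous.} Condition (i), compact containment, follows from \eqref{eq:uniform-moment} and Chebyshev. For condition (ii), take stopping times $\tau_n\le T$ and $\delta_n\downarrow 0$, and set
\[
\Delta=Y^n_{(\tau_n+\delta_n)\wedge T}-Y^n_{\tau_n}.
\]
I will bound the three pieces of $\E|\Delta|^2$ separately.
\begin{itemize}
\item Drift: $\le C\delta_n^2\,\E[1+\sup|X^n|^2]$.
\item Brownian increment, by the It\^o isometry with the strong Markov property at $\tau_n$: $\le C\delta_n\,\E[1+\sup|X^n|^2]$.
\item Compensated Poisson increment: likewise $\le C\delta_n\,\E\big[\sup_s\int|\gamma(s,\cdot)|^2\nu\big]$. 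This is at most $C\delta_n(1+\E\sup|X^n|^4)^{1/2}$ by (A.3).
\end{itemize}
Hence $\E|\Delta|^2\to 0$, and Chebyshev gives Aldous' condition.

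\textbf{Transfer to $(X^n,K^n)$ — the main obstacle.} The map $\Gamma:Y\mapsto(Y+\sup(-Y)^+,\ \sup(-Y)^+)$ is Lipschitz in the uniform norm. It is also continuous for the $J_1$ topology, because it commutes with time changes: if $\lambda$ is an increasing homeomorphism then $\Gamma(Y\circ\lambda)=\Gamma(Y)\circ\lambda$. So $\Gamma$ maps relatively compact subsets of $D$ to relatively compact subsets of $D$. However, tightness of $X^n$ and $K^n$ separately is weaker than tightness of the pair in $D\times D$. The pair is tight because $(X^n,K^n)$ is the image of the single tight family $Y^n$ under the continuous map $\Gamma:D\to D\times D$, and continuous images of tight families are tight.

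If this transfer causes trouble, the fallback is to apply Aldous directly to $X^n$ and $K^n$. The increments satisfy
\[
|K^n_{\tau+\delta}-K^n_\tau|\le\sup_{\tau\le s\le\tau+\delta}|Y^n_s-Y^n_\tau|,
\]
so the same Brownian, Poisson and drift estimates apply, with BDG used on the interval $[\tau_n,\tau_n+\delta_n]$.
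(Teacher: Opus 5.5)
Your proposal is correct and follows the paper's proof essentially step for step: reduction to the unreflected input $Y^n$ via the explicit Skorokhod map, BDG and Kunita-type estimates plus Gronwall for the fourth moments, Aldous' criterion for $Y^n$, continuity of $Y\mapsto(\Gamma(Y),\Lambda(Y))$ to pass to $(X^n,K^n)$, and fixed laws or compactness for $B^n$, $N^n$, $q^n$. Your localization step and your time-change proof of $J_1$-continuity into $D\times D$ fill in details the paper only asserts; in the localization, the measure terms are not stopped, so first use the $n$-dependent a priori bound on $\E|X^n_s|^2$ from well-posedness to get $\E[\sup_{t\le T}|X^n_t|^4]<\infty$, and only then run the unlocalized Gronwall.
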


\begin{proof}
Set
\[
\bar b_s^n=\int_A b(s,X_s^n,\cL_{X_s^n},a)q_s^n(da).
\]
Define the unreflected input process by
\begin{equation}\label{eq:Yn}
	\begin{aligned}
		Y_t^n
		=&\,x+\int_0^t \bar b_s^n\,ds
		+\int_0^t\sigma(s,X_s^n,\mathcal L_{X_s^n})\,dB_s^n\\
		&+\int_0^t\int_{\mathbb R_0}
		\gamma(s,X_{s-}^n,\mathcal L_{X_{s-}^n},z)\,\widetilde N^n(ds,dz).
	\end{aligned}
\end{equation}
Then
\[
X_t^n=Y_t^n+K_t^n.
\]
By the Lipschitz property of the one-dimensional Skorokhod map, there exists a constant $C>0$ such that
\begin{equation}\label{eq:skorokhod-estimate}
	\sup_{0\le t\le T}|X_t^n|^4
	+
	\sup_{0\le t\le T}|K_t^n|^4
	\le
	C\left(1+\sup_{0\le t\le T}|Y_t^n|^4\right).
\end{equation}
Hence it suffices to estimate $Y^n$.

By elementary inequalities,
\[
\begin{aligned}
\sup_{0\le t\le T}|Y_t^n|^4\le C\Bigg(&1+\sup_{0\le t\le T}\left|\int_0^t\bar b_s^n\,ds\right|^4
+\sup_{0\le t\le T}\left|\int_0^t\sigma(s,X_s^n,\cL_{X_s^n})\,dB_s^n\right|^4\\
&+\sup_{0\le t\le T}\left|\int_0^t\int_{R_0}\gamma(s,X_{s-}^n,\cL_{X_{s-}^n},z)\,\wt N^n(ds,dz)\right|^4\Bigg).
\end{aligned}
\]
First, by Hölder's inequality, Jensen's inequality, and the growth condition on $b$,
\[
\begin{aligned}
\E^{\Prob^n}\left[\sup_{0\le t\le T}\left|\int_0^t\bar b_s^n\,ds\right|^4\right]
&\le C\E^{\Prob^n}\left[\int_0^T|\bar b_s^n|^4\,ds\right]\\
&\le C\E^{\Prob^n}\left[\int_0^T\int_A |b(s,X_s^n,\cL_{X_s^n},a)|^4q_s^n(da)\,ds\right]\\
&\le C\left(1+\int_0^T\E^{\Prob^n}|X_s^n|^4\,ds\right)\\
&\le C\left(1+\int_0^T\E^{\Prob^n}\left[\sup_{0\le r\le s}|X_r^n|^4\right]ds\right).
\end{aligned}
\]
Second, by the Burkholder--Davis--Gundy inequality and the growth condition on $\sigma$,
\[
\begin{aligned}
\E^{\Prob^n}\left[\sup_{0\le t\le T}\left|\int_0^t\sigma(s,X_s^n,\cL_{X_s^n})\,dB_s^n\right|^4\right]
&\le C\E^{\Prob^n}\left[\left(\int_0^T|\sigma(s,X_s^n,\cL_{X_s^n})|^2\,ds\right)^2\right]\\
&\le C\left(1+\int_0^T\E^{\Prob^n}\left[\sup_{0\le r\le s}|X_r^n|^4\right]ds\right).
\end{aligned}
\]
Third, using the Burkholder--Davis--Gundy inequality for martingales with jumps, we have
\[
\begin{aligned}
&\E^{\Prob^n}\left[\sup_{0\le t\le T}\left|\int_0^t\int_{R_0}\gamma(s,X_{s-}^n,\cL_{X_{s-}^n},z)\,\wt N^n(ds,dz)\right|^4\right]\\
&\quad\le C\E^{\Prob^n}\left[\left(\int_0^T\int_{R_0}|\gamma(s,X_{s-}^n,\cL_{X_{s-}^n},z)|^2\nu(dz)ds\right)^2\right]\\
&\qquad +C\E^{\Prob^n}\left[\int_0^T\int_{R_0}|\gamma(s,X_{s-}^n,\cL_{X_{s-}^n},z)|^4\nu(dz)ds\right].
\end{aligned}
\]
By the growth condition on $\gamma$,
\[
\E^{\Prob^n}\left[\sup_{0\le t\le T}\left|\int_0^t\int_{R_0}\gamma(s,X_{s-}^n,\cL_{X_{s-}^n},z)\,\wt N^n(ds,dz)\right|^4\right]
\le C\left(1+\int_0^T\E^{\Prob^n}\left[\sup_{0\le r\le s}|X_r^n|^4\right]ds\right).
\]
Let
\[
\Phi_n(t)=\E^{\Prob^n}\left[\sup_{0\le r\le t}|X_r^n|^4\right].
\]
Combining the above estimates and using the estimate for the Skorokhod map yields
\begin{equation}\label{eq:Phi-Gronwall}
	\Phi_n(t)
	\le
	C\left(1+\int_0^t\Phi_n(s)\,ds\right).
\end{equation}
By Gronwall's lemma,
\[
\sup_{n\ge 1}\E^{\Prob^n}\left[\sup_{0\le t\le T}|X_t^n|^4\right]\le C.
\]
Furthermore, from the preceding estimate for $Y^n$, we have
\[
\E^{\Prob^n}\left[\sup_{0\le t\le T}|Y_t^n|^4\right]
\le C\left(1+\int_0^T\Phi_n(s)\,ds\right).
\]
Together with the uniform boundedness of $\Phi_n$, this gives
\[
\sup_{n\ge 1}\E^{\Prob^n}\left[\sup_{0\le t\le T}|Y_t^n|^4\right]\le C.
\]

On the other hand, by the explicit representation of the one-dimensional
Skorokhod reflection map, we have
\begin{equation}\label{eq:Kn-representation}
	K_t^n
	=
	\sup_{0\le s\le t}(-Y_s^n)^+.
\end{equation}
Since \(K^n\) is a nonnegative and nondecreasing process, it follows that
\[
\sup_{0\leq t\leq T}|K_t^n|
=K_T^n
=\sup_{0\leq t\leq T}(-Y_t^n)^+
\leq \sup_{0\leq t\leq T}|Y_t^n|.
\]
Therefore,
\begin{equation}\label{eq:Kn-estimate}
	\mathbb E^{\mathbb P^n}
	\left[
	\sup_{0\le t\le T}|K_t^n|^4
	\right]
	\le
	\mathbb E^{\mathbb P^n}
	\left[
	\sup_{0\le t\le T}|Y_t^n|^4
	\right]
	\le C.
\end{equation}
Combining this estimate with the preceding estimate for \(X^n\), we obtain
\[
\sup_n \mathbb{E}^{\mathbb{P}^n}\left[
\sup_{0\leq t\leq T}|X_t^n|^4
+
\sup_{0\leq t\leq T}|K_t^n|^4
\right]\leq C.
\]
Thus the uniform moment estimate is proved.

Next, we prove tightness. We first prove the tightness of $(Y^n)_{n\ge 1}$ in $D([0,T],\R)$ by using Aldous' tightness criterion. From the above fourth-moment estimate, there exists a constant $C>0$ such that
\[
\sup_{n\ge 1}\E^{\Prob^n}\left[\sup_{0\le t\le T}|Y_t^n|^4\right]\le C.
\]
Let $\varepsilon>0$. By Markov's inequality, for any $R>0$,
\[
\Prob^n\left(\sup_{0\le t\le T}|Y_t^n|>R\right)
\le \frac{1}{R^4}\E^{\Prob^n}\left[\sup_{0\le t\le T}|Y_t^n|^4\right]
\le \frac{C}{R^4}.
\]
Choosing $R>0$ sufficiently large such that $\frac{C}{R^4}<\varepsilon$, we obtain
\[
\Prob^n\left(Y_t^n\in[-R,R],\ \forall t\in[0,T]\right)
=\Prob^n\left(\sup_{0\le t\le T}|Y_t^n|\le R\right)\ge 1-\varepsilon.
\]
Since $[-R,R]$ is compact in $\R$, $(Y^n)_{n\ge 1}$ satisfies the compact-containment condition.

We now verify Aldous' condition. Let $\tau_n$ be an arbitrary $\{\cF_t^n\}$-stopping time taking values in $[0,T]$, and let $0\le \theta\le\delta$. Then
\[
Y_{\tau_n+\theta}^n-Y_{\tau_n}^n
=\int_{\tau_n}^{\tau_n+\theta}\bar b_s^n\,ds
+\int_{\tau_n}^{\tau_n+\theta}\sigma(s,X_s^n,\cL_{X_s^n})\,dB_s^n
+\int_{\tau_n}^{\tau_n+\theta}\int_{R_0}\gamma(s,X_{s-}^n,\cL_{X_{s-}^n},z)\,\wt N^n(ds,dz).
\]
Set
\[
\tau_n^\theta=(\tau_n+\theta)\wedge T,\quad 0\le \theta\le\delta.
\]
For the drift term, by the Cauchy--Schwarz and Jensen inequalities,
\[
\begin{aligned}
\E^{\Prob^n}\left[\left|\int_{\tau_n}^{\tau_n^\theta}\bar b_s^n\,ds\right|^2\right]
&\le \delta\E^{\Prob^n}\left[\int_{\tau_n}^{\tau_n^\theta}|\bar b_s^n|^2\,ds\right]\\
&\le \delta\E^{\Prob^n}\left[\int_{\tau_n}^{\tau_n^\theta}\int_A|b(s,X_s^n,\cL_{X_s^n},a)|^2q_s^n(da)\,ds\right].
\end{aligned}
\]
By the growth condition on $b$ and the preceding uniform moment estimate,
\[
\E^{\Prob^n}\left[\left|\int_{\tau_n}^{\tau_n^\theta}\bar b_s^n\,ds\right|^2\right]\le C\delta.
\]
For the Brownian term, by Itô's isometry,
\[
\E^{\Prob^n}\left[\left|\int_{\tau_n}^{\tau_n^\theta}\sigma(s,X_s^n,\cL_{X_s^n})\,dB_s^n\right|^2\right]
=\E^{\Prob^n}\left[\int_{\tau_n}^{\tau_n^\theta}|\sigma(s,X_s^n,\cL_{X_s^n})|^2\,ds\right].
\]
By the growth condition on $\sigma$ and the preceding uniform moment estimate,
\[
\E^{\Prob^n}\left[\left|\int_{\tau_n}^{\tau_n^\theta}\sigma(s,X_s^n,\cL_{X_s^n})\,dB_s^n\right|^2\right]\le C\delta.
\]
For the jump term, by the isometry formula for compensated Poisson integrals,
\[
\begin{aligned}
&\E^{\Prob^n}\left[\left|\int_{\tau_n}^{\tau_n^\theta}\int_{R_0}\gamma(s,X_{s-}^n,\cL_{X_{s-}^n},z)\,\wt N^n(ds,dz)\right|^2\right]\\
&\quad=\E^{\Prob^n}\left[\int_{\tau_n}^{\tau_n^\theta}\int_{R_0}|\gamma(s,X_{s-}^n,\cL_{X_{s-}^n},z)|^2\nu(dz)ds\right].
\end{aligned}
\]
By the growth condition on $\gamma$ and the preceding uniform moment estimate,
\[
\E^{\Prob^n}\left[\left|\int_{\tau_n}^{\tau_n^\theta}\int_{R_0}\gamma(s,X_{s-}^n,\cL_{X_{s-}^n},z)\,\wt N^n(ds,dz)\right|^2\right]\le C\delta.
\]
Therefore,
\begin{equation}\label{eq:Aldous-increment}
	\mathbb E^{\mathbb P^n}
	\left[
	|Y_{\tau_n+\theta}^n-Y_{\tau_n}^n|^2
	\right]
	\le C\delta.
\end{equation}
By Markov's inequality,
\[
\Prob^n\left(|Y_{\tau_n+\theta}^n-Y_{\tau_n}^n|>\varepsilon\right)\le \frac{C\delta}{\varepsilon^2}.
\]
Consequently, 
\begin{equation}\label{eq:Aldous-condition}
	\sup_n\sup_{\tau_n}
	\mathbb P^n
	\left(
	|Y_{\tau_n+\theta}^n-Y_{\tau_n}^n|>\varepsilon
	\right)
\xrightarrow[]{\delta\to0} 0.
\end{equation}
By Aldous' tightness criterion, $(Y^n)_{n\ge 1}$ is tight in $D([0,T],\R)$.

Finally, by the continuity of the one-dimensional Skorokhod map
\[
Y\mapsto (X,K)
\]
in the Skorokhod space, and since $X^n=\Gamma(Y^n)$ and $K^n=\Lambda(Y^n)$, where $\Gamma$ and $\Lambda$ denote the state component and reflection component of the one-dimensional Skorokhod map, respectively, namely
\[
\Lambda(Y)_t=\sup_{0\le s\le t}(-Y_s)^+,
\quad \Gamma(Y)_t=Y_t+\Lambda(Y)_t,
\]
it follows that $(X^n,K^n)_{n\ge1}$ is tight in $D([0,T],\Rp)\times D([0,T],\Rp)$. Moreover, since each $B^n$ is a standard Brownian motion, $(B^n)_{n\ge1}$ is tight in $C([0,T],\R^m)$. Since each $N^n$ is a Poisson random measure with intensity measure $ds\nu(dz)$, they have the same law as random variables with values in the space $M_p([0,T]\times R_0)$ of locally finite point measures. For the convergence theory of Poisson random measures and random measure spaces, see \cite[Chapters 3--4]{Kallenberg2017}. Therefore, $(N^n)_{n\ge1}$ is tight in $M_p([0,T]\times R_0)$. On the other hand, by the compactness of the relaxed control space \(\mathcal V\)  under the stable topology; see \cite[Lemma 3.5]{BahlaliGherbalMezerdi2011}; $(q^n)_{n\ge1}$ is tight in \(\mathcal V\) .

Combining these facts, the joint sequence $(X^n,K^n,B^n,N^n,q^n)$ is tight in the product space
\begin{equation}\label{eq:product-space}
	\Gamma
	:=
	D([0,T],\mathbb R_+)
	\times D([0,T],\mathbb R_+)
	\times C([0,T],\mathbb R^m)
	\times M_p([0,T]\times R_0)
	\times \mathcal V .
\end{equation}
The proof is complete.
\end{proof}

\subsection{Existence of an Optimal Relaxed Control}
\begin{theorem}[Existence of an optimal relaxed control]\label{thm:relaxed-existence}
Under Assumptions \textnormal{(A.1)}--\textnormal{(A.4)}, the relaxed control problem admits an optimal solution. That is, there exists $r^*\in\cR$ such that
\[
J(r^*)=\inf_{r\in\cR}J(r).
\]
\end{theorem}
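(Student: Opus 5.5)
We follow the standard compactification route: extract a weakly convergent subsequence, identify the limit on a new probability space as an admissible relaxed control, and show the cost is lower semicontinuous along the sequence.

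Take the minimizing sequence $r^n$ from \eqref{eq:minimizing-sequence}. By Lemma~\ref{lem:moment-tightness}, the laws of $(X^n,K^n,B^n,N^n,q^n)$ are tight on the product space $\Gamma$ in \eqref{eq:product-space}. By Prokhorov's theorem we pass to a subsequence converging in law. Since $\Gamma$ is Polish, the Skorokhod representation theorem gives a probability space $(\wt\Omega,\wt\cF,\wt\Prob)$ carrying processes $(\wt X^n,\wt K^n,\wt B^n,\wt N^n,\wt q^n)$ and $(\wt X,\wt K,\wt B,\wt N,\wt q)$ with the following properties:
\begin{itemize}[label=$\cdot$]
\item each $(\wt X^n,\wt K^n,\wt B^n,\wt N^n,\wt q^n)$ has the law of $(X^n,K^n,B^n,N^n,q^n)$;
\item the convergence $(\wt X^n,\wt K^n,\wt B^n,\wt N^n,\wt q^n)\to(\wt X,\wt K,\wt B,\wt N,\wt q)$ holds $\wt\Prob$-a.s. in $\Gamma$.
\end{itemize}
Since the laws agree, each $n$-th tuple still solves \eqref{eq:minimizing-state} with $\mathcal L_{\wt X^n_s}=\mathcal L_{X^n_s}$, and costs are preserved. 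We take $\wt\cF_t$ to be the augmented filtration generated by $(\wt X,\wt K,\wt B,\wt N,\wt q)$ up to time $t$.

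\textbf{Step 1: the limit noises.} We check that $\wt B$ is an $\{\wt\cF_t\}$-Brownian motion and $\wt N$ an $\{\wt\cF_t\}$-Poisson random measure with compensator $\nu(dz)dt$, independent of each other. The method is the martingale-problem characterization. For bounded continuous functionals $\Psi$ of the path up to time $s$, the identities
\[
\E\bigl[\Psi\,(\phi(\wt B^n_t)-\phi(\wt B^n_s)-\textstyle\int_s^t\tfrac12\Delta\phi(\wt B^n_r)dr)\bigr]=0
\]
hold, together with the analogous exponential-formula identities for $\wt N^n$. These pass to the limit by a.s. convergence and uniform integrability. Here $s,t$ are restricted to the co-countable set of non-fixed-discontinuity times of $\wt X$, and right-continuity extends the result to all times. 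The control $\wt q$ is adapted, and we take its predictable version, which is possible since $q_t$ only enters through $q_t(da)dt$.

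\textbf{Step 2: passing to the limit in the state equation (main obstacle).} We pass to the limit term by term in \eqref{eq:Yn}.
\begin{itemize}[label=$\cdot$]
\item \emph{Measure arguments.} The fourth-moment bound \eqref{eq:uniform-moment} and Vitali's theorem give $W_2(\mathcal L_{\wt X^n_s},\mathcal L_{\wt X_s})\to0$ for all $s$ outside a countable set, hence for a.e. $s$.
\item \emph{Drift.} We write
\[
\int_0^t\!\!\int_A b(s,\wt X^n_s,\mathcal L_{\wt X^n_s},a)\wt q^n_s(da)ds-\int_0^t\!\!\int_A b(s,\wt X_s,\mathcal L_{\wt X_s},a)\wt q_s(da)ds .
\]
The Lipschitz bound controls the change of state and measure arguments. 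The remaining term $\int\!\int b(s,\wt X_s,\mathcal L_{\wt X_s},a)(\wt q^n-\wt q)(ds,da)$ tends to zero by stable convergence, because its integrand is measurable in $s$ and continuous in $a$.
\item \emph{Brownian and jump terms.} These are the delicate part. We use the stability theorem for stochastic integrals (Kurtz--Protter / Jakubowski--Mémin--Pagès), which gives convergence in probability in $D$ when integrands converge in $D$ and integrators are uniformly tight (UT). The compensated Poisson integrals satisfy the UT condition uniformly because of the $L^2(\nu)$ growth bound. Since $\gamma$ is not assumed continuous in $z$, we first truncate to $\{|z|>\epsilon\}$ and approximate $\gamma$ by functions continuous in $z$. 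The small jumps are controlled in $L^2$ by the isometry and by $\int_{|z|\le\epsilon}|\gamma|^2\nu(dz)$, which is small uniformly in $n$ thanks to the moment bound and the $L^2(\nu)$-continuity.
\end{itemize}
Thus $\wt Y^n\to\wt Y$ in probability, where $\wt Y$ is the limit input. Continuity of the Skorokhod map then identifies $(\wt X,\wt K)=(\Gamma(\wt Y),\Lambda(\wt Y))$. This gives the reflection conditions and shows that $\wt r=(\wt\Omega,\wt\cF,\{\wt\cF_t\},\wt\Prob,\wt B,\wt N,\wt q,\wt X,\wt K)\in\cR$.

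\textbf{Step 3: lower semicontinuity of the cost.}
\begin{itemize}[label=$\cdot$]
\item \emph{Running cost.} The $f$-term converges, by the same argument as for the drift, using continuity and quadratic growth of $f$ together with uniform integrability from \eqref{eq:uniform-moment}.
\item \emph{Terminal cost.} The $g$-term converges because $T$ is not a fixed jump time, as $\nu(dz)dt$ has no atoms in time. Hence $\wt X^n_T\to\wt X_T$ a.s., and uniform integrability applies.
\item \emph{Reflection cost.} This term is handled directly by the lower-semicontinuity hypothesis in (A.4).
\end{itemize}
Therefore
\[
J(\wt r)\le\liminf_n J(r^n)=\inf_{\cR}J .
\]
Since $\wt r\in\cR$, the inequality is an equality, so $r^*:=\wt r$ is optimal.
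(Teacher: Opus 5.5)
Your proposal is correct and follows the same compactification route as the paper: tightness from Lemma~\ref{lem:moment-tightness}, then Prokhorov and Skorokhod representation, then a term-by-term passage to the limit (the Lipschitz bound plus stable convergence for the drift, and Kurtz--Protter stability for the Brownian and compensated Poisson integrals), then identification of the reflection through the one-dimensional Skorokhod map, and finally lower semicontinuity of the cost via (A.4). Your explicit check that $\widetilde B$ and $\widetilde N$ remain a Brownian motion and a Poisson random measure for the limit filtration, and your truncation in $z$ for the jump integrand, make rigorous two steps that the paper only asserts.
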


\begin{proof}
Take a minimizing sequence
\[
r^n=(\Omega^n,\cF^n,\{\cF_t^n\}_{t\in[0,T]},\Prob^n,B^n,N^n,q^n,X^n,K^n)\in\cR
\]
satisfying
\[
\lim_{n\to\infty}J(r^n)=\inf_{r\in\cR}J(r).
\]
By the preceding uniform moment estimate and tightness result, the joint sequence
\[
\zeta^n=(X^n,K^n,B^n,N^n,q^n)
\]
is tight in the product space
\[
\Gamma:=D([0,T],\Rp)\times D([0,T],\Rp)\times C([0,T],\R^m)\times M_p([0,T]\times  R_0)\times \mathcal V\ .
\]
Therefore, by Prokhorov's theorem, there exists a subsequence, still denoted by $(\zeta^n)_{n\ge1}$, and a random variable
\[
\zeta=(X,K,B,N,q)
\]
on $\Gamma$ such that
\begin{equation}\label{eq:weak-convergence}
	\mathcal L(\zeta^n)\xrightarrow[n\to\infty]{w} \mathcal L(\zeta)
	\quad\text{on } \Gamma .
\end{equation}
By the Skorokhod representation theorem, there exists a probability space
$
(\hat\Omega,\hat\cF,\hat\Prob)
$
and random variables on this space,
\[
\hat\zeta^n=(\hat X^n,\hat K^n,\hat B^n,\hat N^n,\hat q^n),
\quad
\hat\zeta=(\hat X,\hat K,\hat B,\hat N,\hat q),
\]
such that, for every $n\ge1$,
\[
\cL(\hat\zeta^n)=\cL(\zeta^n),
\]
and
\begin{equation}\label{eq:skorokhod-convergence}
	\hat\zeta^n\to\hat\zeta,
	\quad \widehat{\mathbb P}\text{-a.s.}
\end{equation}
where the convergence is in the product space $\Gamma$.

Since $\hat\zeta^n$ and $\zeta^n$ have the same law, $(\hat X^n,\hat K^n)$ satisfies

\[
\begin{aligned}
\hat X_t^n={}&x+\int_0^t\int_A b(s,\hat X_s^n,\cL_{\hat X_s^n},a)\hat q_s^n(da)\,ds
+\int_0^t\sigma(s,\hat X_s^n,\cL_{\hat X_s^n})\,d\hat B_s^n\\
&\quad +\int_0^t\int_{R_0}\gamma(s,\hat X_{s-}^n,\cL_{\hat X_{s-}^n},z)\,\wt{\hat N}^n(ds,dz)+\hat K_t^n,
\quad t\in[0,T],
\end{aligned}
\]
where
\[
\wt{\hat N}^n(ds,dz)=\hat N^n(ds,dz)-\nu(dz)ds.
\]
Moreover,
\[
\hat X_t^n\ge0,
\quad
\int_0^T\1_{\{\hat X_s^n>0\}}\,d\hat K_s^n=0.
\]

We next show that the limiting process still satisfies the reflected McKean--Vlasov equation with jumps. 
We first handle the drift term. Let
\[
\hat I_b^n(t)=\int_0^t\int_A b(s,\hat X_s^n,\cL_{\hat X_s^n},a)\hat q_s^n(da)ds,
\]
and
\[
\hat I_b(t)=\int_0^t\int_A b(s,\hat X_s,\cL_{\hat X_s},a)\hat q_s(da)ds.
\]
Then
\[
\begin{aligned}
\hat I_b^n(t)-\hat I_b(t)
={}&\int_0^t\int_A\bigl[b(s,\hat X_s^n,\cL_{\hat X_s^n},a)-b(s,\hat X_s,\cL_{\hat X_s},a)\bigr]\hat q_s^n(da)ds\\
&+\int_0^t\int_A b(s,\hat X_s,\cL_{\hat X_s},a)(\hat q_s^n-\hat q_s)(da)ds\\
={}&:\hat I_{b,1}^n(t)+\hat I_{b,2}^n(t).
\end{aligned}
\]

For the first term, by the Lipschitz condition on $b$ with respect to $(x,\mu)$,
\[
|\hat I_{b,1}^n(t)|\le C\int_0^t\left(|\hat X_s^n-\hat X_s|+W_2(\cL_{\hat X_s^n},\cL_{\hat X_s})\right)ds.
\]
By the elementary property of the Wasserstein distance,
\[
W_2^2(\cL_{\hat X_s^n},\cL_{\hat X_s})\le \hat\E|\hat X_s^n-\hat X_s|^2.
\]
Combining the convergence $\hat X^n\xrightarrow[n\to\infty]{a.s.}\hat X$ in $D([0,T],\Rp)$, the preceding uniform moment estimates, and the dominated convergence theorem, we obtain
\[
\hat I_{b,1}^n(t)\xrightarrow[]{n\to\infty}0.
\]

We now handle the second term $\hat I_{b,2}^n(t)$. Since $b$ has linear growth, the function
\[
(s,a)\mapsto b(s,\hat X_s,\cL_{\hat X_s},a)
\]
is generally not bounded, and thus convergence cannot be obtained directly from $\hat q^n\xrightarrow[n\to\infty]{a.s.}\hat q$. We therefore use a truncation argument. Take $\chi_M\in C(\Rp;[0,1])$ such that
\[
\chi_M(x)=1,\quad 0\le x\le M,\quad \chi_M(x)=0,
\quad x\ge M+1.
\]
Define
\[
b_M(s,a)=\chi_M(\hat X_s)b(s,\hat X_s,\cL_{\hat X_s},a).
\]
For fixed $M>0$, $b_M$ is continuous and bounded in $a$. By the convergence of the relaxed control measures $\hat q^n\xrightarrow[n\to\infty]{a.s.}\hat q$ in the stable topology,
\[
\int_0^t\int_A b_M(s,a)\hat q_s^n(da)ds
\xrightarrow[]{n\to\infty}
\int_0^t\int_A b_M(s,a)\hat q_s(da)ds,
\qquad \hat\Prob\text{-a.s.}
\]
On the other hand, by the growth condition on $b$ and the uniform estimates,
\[
\int_0^t\int_A |b(s,\hat X_s,\cL_{\hat X_s},a)-b_M(s,a)|\hat q_s^n(da)ds
\le C\int_0^t\left(1+|\hat X_s|^2+\hat\E|\hat X_s|^2\right)\1_{\{|\hat X_s|>M\}}ds.
\]
The same estimate holds for $\hat q_s(da)$. Letting $M\to\infty$, the tail terms tend to zero by uniform integrability and the dominated convergence theorem. Hence
\[
\hat I_{b,2}^n(t)\xrightarrow[]{n\to\infty}0.
\]
Consequently,
\[
\hat I_b^n(t)\xrightarrow[]{n\to\infty} \hat I_b(t)=\int_0^t\int_A b(s,\hat X_s,\cL_{\hat X_s},a)\hat q_s(da)\,ds,
\quad \hat\Prob\text{-a.s.}
\]

Next, for the Brownian stochastic integral, the Lipschitz and growth conditions on $\sigma$ and the convergence $\hat X^n\xrightarrow[n\to\infty]{a.s.}\hat X$ yield
\[
\sigma(s,\hat X_s^n,\cL_{\hat X_s^n})\xrightarrow[]{n\to\infty} \sigma(s,\hat X_s,\cL_{\hat X_s})
\quad\text{in }L^2([0,T]\times\hat\Omega).
\]
Then, by the stochastic integral convergence theorem of Kurtz--Protter \cite[Theorem 2.2]{KurtzProtter1991},
\[
\int_0^t\sigma(s,\hat X_s^n,\cL_{\hat X_s^n})\,d\hat B_s^n
\xrightarrow[n\to\infty]{\mathbb P}
\int_0^t\sigma(s,\hat X_s,\cL_{\hat X_s})\,d\hat B_s,
\]
where the convergence holds in $D([0,T],\R)$.

We then handle the compensated Poisson integral. By the Lipschitz and growth conditions on $\gamma$,
\[
\hat\E\left[\int_0^T\int_{R_0}|\gamma(s,\hat X_{s-}^n,\cL_{\hat X_{s-}^n},z)-\gamma(s,\hat X_{s-},\cL_{\hat X_{s-}},z)|^2\nu(dz)ds\right]\xrightarrow[]{n\to\infty}0.
\]
Moreover, by the stability result of Kurtz--Protter for stochastic integrals with jumps \cite[Theorem 2.7]{KurtzProtter1991},
\[
\int_0^t\int_{R_0}\gamma(s,\hat X_{s-}^n,\cL_{\hat X_{s-}^n},z)\,\wt{\hat N}^n(ds,dz)
\xrightarrow[n\to\infty]{\mathbb P}
\int_0^t\int_{R_0}\gamma(s,\hat X_{s-},\cL_{\hat X_{s-}},z)\,\wt{\hat N}(ds,dz),
\]
where the convergence holds in $D([0,T],\R)$. Since
\[
\hat X^n\xrightarrow[]{n\to\infty}\hat X,
\quad
\hat K^n\xrightarrow[]{n\to\infty}\hat K,\quad \hat\Prob\text{-a.s.}
\]
together with the convergence of the three types of integral terms above, we let $n\to\infty$ and obtain
\begin{equation}\label{eq:limit-state}
	\begin{aligned}
		\hat X_t
		=&\,x+\int_0^t\int_A b(s,\hat X_s,\mathcal L_{\hat X_s},a)\hat q_s(da)\,ds
		+\int_0^t\sigma(s,\hat X_s,\mathcal L_{\hat X_s})\,d\hat B_s\\
		&+\int_0^t\int_{\mathbb R_0}
		\gamma(s,\hat X_{s-},\mathcal L_{\hat X_{s-}},z)\,\wt{\hat N}(ds,dz)
		+\hat K_t .
	\end{aligned}
\end{equation}
Moreover, by convergence in $D([0,T],\Rp)$ and the closedness of the one-dimensional Skorokhod map,
\[
\hat X_t\ge0,
\quad t\in[0,T],
\]
and
\[
\int_0^T\1_{\{\hat X_s>0\}}\,d\hat K_s=0.
\]
Therefore, the limiting tuple
\[
\hat r=(\hat\Omega,\hat\cF,\{\hat\cF_t\}_{t\in[0,T]},\hat\Prob,\hat B,\hat N,\hat q,\hat X,\hat K)
\]
is still an admissible relaxed control.

It remains to prove its optimality. Since $\hat\zeta^n$ and $\zeta^n$ have the same law,
\[
J(\hat r^n)=J(r^n).
\]
By the continuity and growth conditions of the running cost and terminal cost, together with the preceding uniform moment estimates, and by the lower semicontinuity of the reflection cost term under the convergence adopted in Assumption A.4, we obtain
\begin{equation}\label{eq:cost-lsc}
	J(\hat r)
	\le
	\liminf_{n\to\infty}J(\hat r^n).
\end{equation}
Therefore,
\[
J(\hat r)\le \liminf_{n\to\infty}J(\hat r^n)=\liminf_{n\to\infty}J(r^n)=\inf_{r\in\cR}J(r).
\]
On the other hand, by the definition of the infimum,
\[
J(\hat r)\ge \inf_{r\in\cR}J(r).
\]
Thus
\begin{equation}\label{eq:relaxed-optimality}
	J(\hat r)
	=
	\inf_{r\in\mathcal R}J(r).
\end{equation}
Hence $\hat r$ is an optimal relaxed control. The proof is complete.
\end{proof}

\subsection{Existence of a Strict Optimal Control}
\begin{corollary}[Existence of a strict optimal control]\label{cor:strict-existence}
Under Assumptions \textnormal{(A.1)}--\textnormal{(A.5)}, the strict control problem admits an optimal solution. That is, there exists an admissible strict control
\[
\alpha^*=(\Omega^*,\cF^*,\{\cF_t^*\}_{t\in[0,T]},\Prob^*,B^*,N^*,u^*,X^*,K^*)\in\cU
\]
such that
\[
J(\alpha^*)=\inf_{\alpha\in\cU}J(\alpha).
\]
Moreover,
\[
\inf_{\alpha\in\cU}J(\alpha)=\inf_{r\in\cR}J(r).
\]
\end{corollary}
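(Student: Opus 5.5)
The plan is the standard Filippov-type measurable selection argument applied to the optimal relaxed control $\hat r$ from Theorem~\ref{thm:relaxed-existence}. Since strict controls embed into $\mathcal R$ via $q_t=\delta_{u_t}$, we already have $\inf_{\cR}J\le\inf_{\cU}J$. It therefore suffices to construct a strict control $\alpha^*$, on the same stochastic basis as $\hat r$, with $J(\alpha^*)\le J(\hat r)$. Then
\[
J(\alpha^*)\le J(\hat r)=\inf_{\cR}J\le\inf_{\cU}J\le J(\alpha^*),
\]
which gives both claims at once.

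\textbf{Step 1: pointwise convexification.} Fix the optimal relaxed tuple $\hat r=(\hat\Omega,\hat\cF,\{\hat\cF_t\},\hat\Prob,\hat B,\hat N,\hat q,\hat X,\hat K)$. For each $(s,\omega)$ the pair
\[
\Bigl(\int_A b(s,\hat X_s,\cL_{\hat X_s},a)\hat q_s(da),\ \int_A f(s,\hat X_s,\cL_{\hat X_s},a)\hat q_s(da)\Bigr)
\]
is an average of points of $S(s,\hat X_s,\cL_{\hat X_s})$ with respect to the probability $\hat q_s$. Since this set is compact (continuous image of compact $A$) and convex by (A.5), the average lies in it. So there is $a\in A$ with
\[
b(s,\hat X_s,\cL_{\hat X_s},a)=\int_A b\,d\hat q_s \quad\text{and}\quad f(s,\hat X_s,\cL_{\hat X_s},a)=\int_A f\,d\hat q_s.
\]
Equality is achieved, not merely an inequality for $f$, because the paper's Roxin set has no epigraph part.

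\textbf{Step 2: measurable selection (the main obstacle).} We need an $A$-valued process $u^*$ that realizes these identities and is progressively measurable with respect to $\{\hat\cF_t\}$. The set-valued map
\[
(s,\omega)\mapsto\Bigl\{a\in A:\ (b,f)(s,\hat X_s,\cL_{\hat X_s},a)=\Bigl(\int_A b\,d\hat q_s,\int_A f\,d\hat q_s\Bigr)\Bigr\}
\]
has nonempty compact values. Its graph is measurable with respect to the progressive $\sigma$-field (predictable, if we use $\hat X_{s-}$, which differs from $\hat X_s$ only on a $ds$-null set for each $\omega$). This holds because $b$ and $f$ are continuous, $\hat X$ is càdlàg adapted, $s\mapsto\cL_{\hat X_s}$ is deterministic, and $\hat q$ is predictable. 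I would then invoke the Kuratowski--Ryll-Nardzewski selection theorem, or Filippov's implicit function lemma, on $[0,T]\times\hat\Omega$ with the progressive $\sigma$-field. This yields a progressively measurable $u^*$ with $b(s,\hat X_s,\cL_{\hat X_s},u^*_s)=\int_A b\,d\hat q_s$ and similarly for $f$, for $ds\otimes d\hat\Prob$-a.e.\ $(s,\omega)$. The care needed is to check that the measurability holds in the product $\sigma$-field with the filtration, rather than only for fixed $\omega$.

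\textbf{Step 3: identification and cost comparison.} Substituting $u^*$ into \eqref{eq:strict-state} shows that $(\hat X,\hat K)$ solves the strict equation driven by $u^*$ on the same basis. The measure flow $\cL_{\hat X_s}$ is unchanged, since $\hat X$ is literally the same process. By the uniqueness part of Proposition 2.1 (well-posedness of the state equation), $(\hat X,\hat K)$ is the state of $\alpha^*=(\hat\Omega,\ldots,u^*,\hat X,\hat K)\in\cU$. The running cost agrees pointwise by Step 1, while the reflection and terminal costs are identical. Hence $J(\alpha^*)=J(\hat r)$, and the chain of inequalities above closes the proof.
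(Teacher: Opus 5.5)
Your proposal is correct and follows essentially the same route as the paper. Roxin convexity places the $\hat q_s$-averaged pair $(b,f)$ in $S(s,\hat X_s,\cL_{\hat X_s})$; a measurable selection then gives a strict control $u^*$ on the same basis that reproduces the drift and the running cost, and the chain $\inf_{\cR}J\le\inf_{\cU}J\le J(\alpha^*)=J(\hat r)=\inf_{\cR}J$ finishes the argument. Your handling of the selection step (compact values, graph measurability in the progressive $\sigma$-field, and hence a progressively measurable rather than merely adapted $u^*$) is more careful than the paper's, and your appeal to uniqueness in Step 3 is harmless but unnecessary, since admissibility only requires $(\hat X,\hat K)$ to satisfy the strict equation.
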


\begin{proof}
By the existence theorem for optimal relaxed controls, there exists an optimal relaxed control
\[
\hat r=(\hat\Omega,\hat\cF,\{\hat\cF_t\}_{t\in[0,T]},\hat\Prob,\hat B,\hat N,\hat q,\hat X,\hat K)\in\cR
\]
satisfying
\[
J(\hat r)=\inf_{r\in\cR}J(r).
\]
We show that, under the Roxin convexity condition, one can construct a strict control with the same state equation and the same cost value as this optimal relaxed control.

For any $(t,\omega)\in[0,T]\times\hat\Omega$, define
\[
\hat c(t,\omega)=\left(\int_A b(t,\hat X_t(\omega),\cL_{\hat X_t},a)\hat q_t(\omega,da),
\int_A f(t,\hat X_t(\omega),\cL_{\hat X_t},a)\hat q_t(\omega,da)\right).
\]
By the Roxin condition, the set
\[
S(t,x,\mu)=\{(b(t,x,\mu,a),f(t,x,\mu,a)):a\in A\}
\]
is closed and convex in $\R\times\R$. Since $\hat q_t(\omega,\cdot)$ is a probability measure on $A$ and
\[
(b(t,\hat X_t(\omega),\cL_{\hat X_t},a),f(t,\hat X_t(\omega),\cL_{\hat X_t},a))
\in S(t,\hat X_t(\omega),\cL_{\hat X_t})
\]
for all $a\in A$, the stability of closed convex sets under integration with respect to probability measures implies that
\[
\hat c(t,\omega)\in S(t,\hat X_t(\omega),\cL_{\hat X_t}).
\]
Thus, for almost every $(t,\omega)$, there exists $a\in A$ such that
\[
\hat c(t,\omega)=\bigl(b(t,\hat X_t(\omega),\cL_{\hat X_t},a),f(t,\hat X_t(\omega),\cL_{\hat X_t},a)\bigr).
\]
By the measurable selection theorem, there exists an $A$-valued $\{\hat\cF_t\}$-adapted process
\[
\hat u=(\hat u_t)_{t\in[0,T]}
\]
such that, for almost every $(t,\omega)$,
\begin{equation}\label{eq:selection}
	\begin{aligned}
		\int_A b(t,\hat X_t,\mathcal L_{\hat X_t},a)\hat q_t(da)
		&=
		b(t,\hat X_t,\mathcal L_{\hat X_t},\hat u_t),\\
		\int_A f(t,\hat X_t,\mathcal L_{\hat X_t},a)\hat q_t(da)
		&=
		f(t,\hat X_t,\mathcal L_{\hat X_t},\hat u_t).
	\end{aligned}
\end{equation}
Since $\hat r$ is an admissible relaxed control, $(\hat X,\hat K)$ satisfies
\[
\begin{aligned}
\hat X_t={}&x+\int_0^t\int_A b(s,\hat X_s,\cL_{\hat X_s},a)\hat q_s(da)ds
+\int_0^t\sigma(s,\hat X_s,\cL_{\hat X_s})d\hat B_s\\
&\quad +\int_0^t\int_{R_0}\gamma(s,\hat X_{s-},\cL_{\hat X_{s-}},z)\,\wt{\hat N}(ds,dz)+\hat K_t.
\end{aligned}
\]
By the above choice of $\hat u$,
\[
\hat X_t=x+\int_0^t b(s,\hat X_s,\cL_{\hat X_s},\hat u_s)ds
+\int_0^t\sigma(s,\hat X_s,\cL_{\hat X_s})d\hat B_s
+\int_0^t\int_{R_0}\gamma(s,\hat X_{s-},\cL_{\hat X_{s-}},z)\,\wt{\hat N}(ds,dz)+\hat K_t.
\]
At the same time, $\hat X_t\ge0$, $t\in[0,T]$, and
\[
\int_0^T\1_{\{\hat X_s>0\}}\,d\hat K_s=0.
\]
Therefore
\[
\hat\alpha=(\hat\Omega,\hat\cF,\{\hat\cF_t\}_{t\in[0,T]},\hat\Prob,\hat B,\hat N,\hat u,\hat X,\hat K)
\]
is an admissible strict control, i.e., $\hat\alpha\in\cU$.

We now compare the cost functionals. By the construction of $\hat u$,
\[
\int_A f(s,\hat X_s,\cL_{\hat X_s},a)\hat q_s(da)=f(s,\hat X_s,\cL_{\hat X_s},\hat u_s).
\]
The reflecting cost term $\int_0^T c(s,\hat X_s,\cL_{\hat X_s})d\hat K_s$ and the terminal cost term $g(\hat X_T,\cL_{\hat X_T})$ do not depend on the control variable. Therefore,
\[
J(\hat\alpha)=J(\hat r).
\]
Since $\hat r$ is an optimal relaxed control,
\[
J(\hat\alpha)=J(\hat r)=\inf_{r\in\cR}J(r).
\]
On the other hand, every strict control can be regarded as a special relaxed control, namely
\[
q_t(da)=\delta_{u_t}(da).
\]
Hence
\[
\inf_{r\in\cR}J(r)\le \inf_{\alpha\in\cU}J(\alpha).
\]
Since $\hat\alpha\in\cU$, we also have
\[
\inf_{\alpha\in\cU}J(\alpha)\le J(\hat\alpha)=\inf_{r\in\cR}J(r).
\]
Combining the two inequalities,
\[
\inf_{\alpha\in\cU}J(\alpha)=\inf_{r\in\cR}J(r)=J(\hat\alpha).
\]
Thus $\hat\alpha$ is a strict optimal control. The proof is complete.
\end{proof}

\subsection{Approximation of Relaxed Controls by Strict Controls}
Corollary \ref{cor:strict-existence} shows that, when the Roxin convexity condition holds, the optimal relaxed control can be realized by some strict control. When the Roxin condition does not hold, however, a strict optimal control may fail to exist. The following theorem shows that, even in the general case, relaxed controls can still be approximated by a sequence of strict controls, so that the strict control problem and the relaxed control problem have the same value.

\begin{theorem}[Approximation of relaxed controls by strict controls]\label{thm:strict-approximation}
Under Assumptions \textnormal{(A.1)}--\textnormal{(A.4)}, for any admissible relaxed control
\[
r=(\Omega,\cF,\{\cF_t\}_{t\in[0,T]},\Prob,B,N,q,X,K)\in\cR,
\]
there exists a sequence of admissible strict controls
\[
\alpha^n=(\Omega,\cF,\{\cF_t\}_{t\in[0,T]},\Prob,B,N,u^n,X^n,K^n)\in\cU
\]
such that
\[
J(\alpha^n)\xrightarrow[]{n\to\infty} J(r).
\]
Consequently,
\begin{equation}\label{eq:value-equality}
	\inf_{\alpha\in\mathcal U}J(\alpha)
	=
	\inf_{r\in\mathcal R}J(r).
\end{equation}
\end{theorem}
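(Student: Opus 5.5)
The proof rests on the chattering lemma. Once strict controls approximating $q$ in the stable topology are in hand, we obtain mean-square stability of the state equation on the fixed probability space.

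\textbf{Step 1 (chattering).} By the chattering lemma for relaxed controls (see \cite{BahlaliGherbalMezerdi2011}), there is a sequence of $A$-valued $\{\cF_t\}$-progressively measurable processes $u^n$ such that $\delta_{u^n_t}(da)\,dt\to q_t(da)\,dt$ in the stable topology, $\Prob$-a.s. In particular, for every bounded measurable $\varphi(t,a)$ continuous in $a$ (possibly random, with the convergence pathwise),
\[
\int_0^t\varphi(s,u^n_s)\,ds\longrightarrow\int_0^t\int_A\varphi(s,a)q_s(da)\,ds \quad\text{uniformly in } t.
\]
By the well-posedness proposition, each $u^n$ yields a unique solution $(X^n,K^n)$ on the same space with the same $B,N$. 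Hence $\alpha^n\in\cU$. By Lemma~\ref{lem:moment-tightness}, $\sup_n\E[\sup_t|X^n_t|^4+\sup_t|K^n_t|^4]\le C$.

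\textbf{Step 2 (strong convergence of states).} Let $Y^n,Y$ be the unreflected inputs. Since the one-dimensional Skorokhod map is Lipschitz in the sup norm,
\[
\sup_{s\le t}|X^n_s-X_s|+\sup_{s\le t}|K^n_s-K_s|\le 2\sup_{s\le t}|Y^n_s-Y_s|.
\]
Write the drift difference as
\[
\int_0^t\bigl[b(s,X^n_s,\cL_{X^n_s},u^n_s)-b(s,X_s,\cL_{X_s},u^n_s)\bigr]ds+\rho^n_t,
\quad
\rho^n_t=\int_0^t b(s,X_s,\cL_{X_s},u^n_s)ds-\int_0^t\int_A b(s,X_s,\cL_{X_s},a)q_s(da)ds .
\]
Use the Lipschitz bound together with $W_2^2(\cL_{X^n_s},\cL_{X_s})\le\E|X^n_s-X_s|^2$ on the first term, and BDG plus the $L^2(\nu)$-Lipschitz bound on the Brownian and compensated Poisson terms. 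With $\Psi_n(t)=\E\sup_{s\le t}|X^n_s-X_s|^2$, this gives
\[
\Psi_n(t)\le C\int_0^t\Psi_n(s)ds+C\,\E\sup_{s\le T}|\rho^n_s|^2 .
\]
Gronwall then gives $\Psi_n(T)+\E\sup_t|K^n_t-K_t|^2\le C\E\sup_t|\rho^n_t|^2$.

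\textbf{Step 3 (the remainder $\rho^n$ — main obstacle).} Here the integrand $a\mapsto b(s,X_s,\cL_{X_s},a)$ is random and unbounded, and stable convergence only handles bounded Carathéodory integrands. I will use the truncation $\chi_M(X_s)$ as in the proof of Theorem~\ref{thm:relaxed-existence}. On $\{X_s\le M+1\}$ the integrand is bounded and continuous in $a$, so the truncated $\rho^n$ tends to $0$ uniformly in $t$ a.s. by Step 1, with domination by $C(1+\sup|X|^2)$. The tail is bounded by $C\E\int_0^T(1+|X_s|^2+\E|X_s|^2)^2\1_{\{X_s>M\}}ds$, which is uniformly small in $n$ by the fourth-moment bound. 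Dominated convergence then gives $\E\sup_t|\rho^n_t|^2\to0$. The delicate point is that the chattering approximation must hold pathwise for random integrands; if only convergence of laws is available, I would instead pass through a countable dense family of bounded test functions and a diagonal argument.

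\textbf{Step 4 (costs).} The reflecting term converges by the uniform mean-square continuity hypothesis in (A.4), since $\E[\sup|X^n-X|^2+\sup|K^n-K|^2]\to0$. For $g$, continuity, $W_2(\cL_{X^n_T},\cL_{X_T})\to0$ and uniform integrability from the fourth moments give convergence, using convergence in probability along subsequences. The running cost splits exactly like the drift. The term $f(\cdot,X^n,\cdot,u^n)-f(\cdot,X,\cdot,u^n)$ tends to $0$ by uniform continuity of $f$ on compacts (with $A$ compact) plus truncation. The term $\int f(s,X_s,\cL_{X_s},u^n_s)ds-\int\!\!\int f\,q_s(da)ds$ tends to $0$ by the Step 3 argument. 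Hence $J(\alpha^n)\to J(r)$. Taking $r$ optimal from Theorem~\ref{thm:relaxed-existence}, or any $r$, gives $\inf_\cU J\le\inf_\cR J$. The reverse inequality holds because $\cU\subset\cR$ via $q_t=\delta_{u_t}$, which yields \eqref{eq:value-equality}.
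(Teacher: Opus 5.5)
Your proposal is correct and follows essentially the same route as the paper. Both use the chattering lemma on the same stochastic basis, then the sup-norm Lipschitz bound of the Skorokhod map with BDG and Gronwall to get $\E[\sup_t|X^n_t-X_t|^2+\sup_t|K^n_t-K_t|^2]\to0$, then term-by-term convergence of the cost, with the reflecting term handled by the uniform mean-square continuity hypothesis in (A.4). Your truncation by $\chi_M(X_s)$ combined with pathwise stable convergence just fills in the paper's one-line claim that the chattering remainder $\varepsilon_n$ tends to zero. The only slip is cosmetic: the constant bounding $\sup|X^n-X|+\sup|K^n-K|$ by $\sup|Y^n-Y|$ is $3$, not $2$.
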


\begin{proof}
Take any admissible relaxed control
\[
r=(\Omega,\cF,\{\cF_t\}_{t\in[0,T]},\Prob,B,N,q,X,K)\in\cR.
\]
By the chattering lemma, there exists a sequence of $A$-valued $\{\cF_t\}$-adapted processes $u^n=(u_t^n)_{t\in[0,T]}$ such that the corresponding Dirac-measure controls
\[
q_t^n(da):=\delta_{u_t^n}(da)
\]
satisfy
\begin{equation}\label{eq:chattering-convergence}
	q^n(dt,da)
	=
	\delta_{u_t^n}(da)dt
\xrightarrow[]{n\to\infty} 
	q(dt,da)
	=
	q_t(da)dt
\end{equation}
in the stable topology of $\mathcal V $.

For each $n\ge1$, let $(X^n,K^n)$ be the state process corresponding to the strict control $u^n$, namely
\[
\begin{aligned}
X_t^n={}&x+\int_0^t b(s,X_s^n,\cL_{X_s^n},u_s^n)ds
+\int_0^t\sigma(s,X_s^n,\cL_{X_s^n})dB_s\\
&\quad +\int_0^t\int_{R_0}\gamma(s,X_{s-}^n,\cL_{X_{s-}^n},z)\widetilde{N}(ds,dz)+K_t^n,
\quad t\in[0,T],
\end{aligned}
\]
and
\[
X_t^n\ge0,
\qquad
\int_0^T\1_{\{X_s^n>0\}}\,dK_s^n=0.
\]
Denote by $Y$ the unreflected input process corresponding to the relaxed control $q$,
\[
Y_t=x+\int_0^t\int_A b(s,X_s,\cL_{X_s},a)q_s(da)ds
+\int_0^t\sigma(s,X_s,\cL_{X_s})dB_s
+\int_0^t\int_{R_0}\gamma(s,X_{s-},\cL_{X_{s-}},z)\widetilde{N}(ds,dz),
\]
and by $Y^n$ the unreflected input process corresponding to the strict control $u^n$,
\[
Y_t^n=x+\int_0^t b(s,X_s^n,\cL_{X_s^n},u_s^n)ds
+\int_0^t\sigma(s,X_s^n,\cL_{X_s^n})dB_s
+\int_0^t\int_{R_0}\gamma(s,X_{s-}^n,\cL_{X_{s-}^n},z)\widetilde{N}(ds,dz).
\]
Then
\[
X^n=\Gamma(Y^n),\quad K^n=\Lambda(Y^n),
\]
and
\[
X=\Gamma(Y),\quad K=\Lambda(Y),
\]
where $\Gamma$ and $\Lambda$ denote the state component and the reflection component of the one-dimensional Skorokhod map, respectively.

By the Lipschitz property of the one-dimensional Skorokhod map,
\[
\E\left[\sup_{0\le t\le T}|X_t^n-X_t|^2+\sup_{0\le t\le T}|K_t^n-K_t|^2\right]
\le C\E\left[\sup_{0\le t\le T}|Y_t^n-Y_t|^2\right].
\]
We estimate the right-hand side. Let
\[
\Delta_n(t)=\E\left[\sup_{0\le s\le t}|X_s^n-X_s|^2\right].
\]
For the drift term, we decompose
\[
\begin{aligned}
&\int_0^t b(s,X_s^n,\cL_{X_s^n},u_s^n)ds
-\int_0^t\int_A b(s,X_s,\cL_{X_s},a)q_s(da)ds\\
&\quad=\int_0^t\bigl[b(s,X_s^n,\cL_{X_s^n},u_s^n)-b(s,X_s,\cL_{X_s},u_s^n)\bigr]ds\\
&\qquad+\int_0^t\left[b(s,X_s,\cL_{X_s},u_s^n)-\int_A b(s,X_s,\cL_{X_s},a)q_s(da)\right]ds.
\end{aligned}
\]
The first term is controlled by the Lipschitz condition on $b$ with respect to $(x,\mu)$:
\[
\begin{aligned}
&\E\left[\sup_{0\le r\le t}\left|\int_0^r\bigl[b(s,X_s^n,\cL_{X_s^n},u_s^n)-b(s,X_s,\cL_{X_s},u_s^n)\bigr]ds\right|^2\right]\\
&\quad\le C\int_0^t\E\left[|X_s^n-X_s|^2+W_2^2(\cL_{X_s^n},\cL_{X_s})\right]ds.
\end{aligned}
\]
By the property of the Wasserstein distance,
\[
W_2^2(\cL_{X_s^n},\cL_{X_s})\le \E|X_s^n-X_s|^2,
\]
and hence
\[
\begin{aligned}
&\E\left[\sup_{0\le r\le t}\left|\int_0^r\bigl[b(s,X_s^n,\cL_{X_s^n},u_s^n)-b(s,X_s,\cL_{X_s},u_s^n)\bigr]ds\right|^2\right]
\le C\int_0^t\Delta_n(s)ds.
\end{aligned}
\]
For the second term, the chattering lemma and the growth condition on $b$ give
\[
\E\left[\sup_{0\le r\le T}\left|\int_0^r\left(b(s,X_s,\cL_{X_s},u_s^n)-\int_A b(s,X_s,\cL_{X_s},a)q_s(da)\right)ds\right|^2\right]\xrightarrow[]{n\to\infty}0.
\]
Denote this term by $\varepsilon_n$, so that $\varepsilon_n\xrightarrow[]{n\to\infty}0$.

For the diffusion term, by the Burkholder--Davis--Gundy inequality, the Lipschitz condition on $\sigma$, and the property of the Wasserstein distance,
\[
\begin{aligned}
&\E\left[\sup_{0\le r\le t}\left|\int_0^r\bigl[\sigma(s,X_s^n,\cL_{X_s^n})-\sigma(s,X_s,\cL_{X_s})\bigr]dB_s\right|^2\right]\\
&\quad\le C\int_0^t\E\left[|X_s^n-X_s|^2+W_2^2(\cL_{X_s^n},\cL_{X_s})\right]ds\\
&\quad\le C\int_0^t\Delta_n(s)ds.
\end{aligned}
\]
For the jump term, by the BDG inequality for compensated Poisson integrals, the Lipschitz condition on $\gamma$, and the property of the Wasserstein distance,
\[
\begin{aligned}
&\E\left[\sup_{0\le r\le t}\left|\int_0^r\int_{R_0}\bigl[\gamma(s,X_{s-}^n,\cL_{X_{s-}^n},z)-\gamma(s,X_{s-},\cL_{X_{s-}},z)\bigr]\widetilde{N}(ds,dz)\right|^2\right]\\
&\quad\le C\E\left[\int_0^t\int_{R_0}|\gamma(s,X_{s-}^n,\cL_{X_{s-}^n},z)-\gamma(s,X_{s-},\cL_{X_{s-}},z)|^2\nu(dz)ds\right]\\
&\quad\le C\int_0^t\Delta_n(s)ds.
\end{aligned}
\]
Combining the above estimates and using the Lipschitz property of the Skorokhod map, we obtain
\[
\Delta_n(t)\le C\varepsilon_n+C\int_0^t\Delta_n(s)ds.
\]
By Gronwall's lemma, $\Delta_n(T)\xrightarrow[]{n\to\infty}0$. Therefore,
\[
\E\left[\sup_{0\le t\le T}|X_t^n-X_t|^2\right]\xrightarrow[]{n\to\infty}0,
\]
and the Skorokhod-map estimate also gives
\[
\E\left[\sup_{0\le t\le T}|K_t^n-K_t|^2\right]\xrightarrow[]{n\to\infty}0.
\]

We now prove convergence of the cost functionals. Decompose the running cost term as
\[
\begin{aligned}
&\int_0^T f(s,X_s^n,\cL_{X_s^n},u_s^n)ds
-\int_0^T\int_A f(s,X_s,\cL_{X_s},a)q_s(da)ds\\
&=\int_0^T\bigl[f(s,X_s^n,\cL_{X_s^n},u_s^n)-f(s,X_s,\cL_{X_s},u_s^n)\bigr]ds\\
&\quad+\int_0^T\left[f(s,X_s,\cL_{X_s},u_s^n)-\int_A f(s,X_s,\cL_{X_s},a)q_s(da)\right]ds.
\end{aligned}
\]
The first term converges to zero by the continuity and growth condition of $f$ with respect to $(x,\mu)$ and the $L^2$-convergence of $X^n$ to $X$. The second term converges to zero by the chattering lemma and the growth condition on $f$. Hence
\[
\E\left[\int_0^T f(s,X_s^n,\cL_{X_s^n},u_s^n)ds\right]
\xrightarrow[]{n\to\infty}
\E\left[\int_0^T\int_A f(s,X_s,\cL_{X_s},a)q_s(da)ds\right].
\]
For the terminal cost term, the continuity and growth condition on $g$ and the convergence $X_T^n\xrightarrow[n\to\infty]{L^2} X_T$ in $L^2$ imply
\[
\E[g(X_T^n,\cL_{X_T^n})]\to \E[g(X_T,\cL_{X_T})].
\]
If the cost functional contains the reflecting cost term, then by the continuity and growth condition on $c$ and the convergence $(X^n,K^n)\xrightarrow[]{n\to\infty}(X,K)$,
\[
\E\left[\int_0^T c(s,X_s^n,\cL_{X_s^n})\,dK_s^n\right]
\xrightarrow[]{n\to\infty}
\E\left[\int_0^T c(s,X_s,\cL_{X_s})\,dK_s\right].
\]
Therefore,
\begin{equation*}\label{eq:cost-convergence}
	J(\alpha^n)\xrightarrow[]{n\to\infty} J(r).
\end{equation*}

Finally, since every strict control can be regarded as a special relaxed control, i.e., $q_t(da)=\delta_{u_t}(da)$, we have
\[
\inf_{r\in\cR}J(r)\le \inf_{\alpha\in\cU}J(\alpha).
\]

On the other hand, by the approximation result above, for every $r\in\cR$ there exists a sequence of strict controls $(\alpha^n)_{n\ge1}\subset\cU$ such that $J(\alpha^n)\xrightarrow[]{n\to\infty} J(r)$. Hence
$
\inf_{\alpha\in\cU}J(\alpha)\le J(r).
$
Taking the infimum over $r\in\cR$ gives
$
\inf_{\alpha\in\cU}J(\alpha)\le \inf_{r\in\cR}J(r).
$
Combining these two inequalities, we obtain
$
\inf_{\alpha\in\cU}J(\alpha)=\inf_{r\in\cR}J(r).
$
The proof is complete.
\end{proof}

\appendix
\renewcommand{\thelemma}{\Alph{section}.\arabic{lemma}}
\renewcommand{\theremark}{\Alph{section}.\arabic{remark}}
\renewcommand{\theproposition}{\Alph{section}.\arabic{proposition}}
\renewcommand{\thedefinition}{\Alph{section}.\arabic{definition}}
\renewcommand{\theequation}{\Alph{section}.\arabic{equation}}

\section{Auxiliary Theorems}
\begin{lemma}[Aldous' tightness criterion \cite{Aldous1978}]
Let $(Y^n)_{n\ge1}$ be a sequence of stochastic processes taking values in $D([0,T],\R)$. Suppose that the following two conditions are satisfied:
\begin{enumerate}[label=\arabic*.]
\item For every $\varepsilon>0$, there exists a compact set $K_\varepsilon\subset\R$ such that
\[
\inf_{n\ge1}\Prob^n(Y_t^n\in K_\varepsilon,\ \forall t\in[0,T])\ge1-\varepsilon.
\]
\item For every $\varepsilon>0$,
\[
\lim_{\delta\downarrow0}\sup_{n\ge1}\sup_{\tau_n}\sup_{0\le\theta\le\delta}
\Prob^n(|Y_{\tau_n+\theta}^n-Y_{\tau_n}^n|>\varepsilon)=0,
\]
where the supremum is taken over all stopping times $\tau_n$ taking values in $[0,T]$, and $Y_{\tau_n+\theta}^n$ is understood as $Y_{(\tau_n+\theta)\wedge T}^n$.
\end{enumerate}
Then $(Y^n)_{n\ge1}$ is tight in the Skorokhod space $D([0,T],\R)$.
\end{lemma}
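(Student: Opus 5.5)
This is Aldous' criterion in the appendix; it is cited rather than proved, so I would prove it from the standard characterization of relative compactness in $D([0,T],\R)$. A sequence of laws is tight in the Skorokhod $J_1$ topology if and only if two things hold. First, compact containment, which is given directly by Condition 1. Second, for every $\varepsilon,\eta>0$ there is $\delta>0$ with $\sup_n\Prob^n(w'(Y^n,\delta)>\varepsilon)\le\eta$. Here $w'$ is the càdlàg modulus
\[
w'(y,\delta)=\inf_{\{t_i\}}\max_i\sup_{s,t\in[t_{i-1},t_i)}|y_s-y_t|,
\]
where the infimum runs over partitions with mesh greater than $\delta$. This is Billingsley's Theorem 13.2. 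So the whole task is to derive the modulus bound from Condition 2.

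\textbf{Step 1: localized chains of stopping times.} Fix $\varepsilon>0$ and define, for each $n$, the successive $\varepsilon$-oscillation times
\[
\tau_0^n=0,\qquad \tau_{k+1}^n=\inf\{t>\tau_k^n:|Y_t^n-Y_{\tau_k^n}^n|>\varepsilon\}\wedge T.
\]
These are stopping times by right-continuity and the usual conditions. On the event that every gap $\tau_{k+1}^n-\tau_k^n$ with $\tau_k^n<T$ exceeds $\delta$, and the number of steps before reaching $T$ is finite, the partition $\{\tau_k^n\}$ has mesh greater than $\delta$, except possibly for the final interval, which is handled by the usual convention. Oscillation within each interval is at most $2\varepsilon$, so $w'(Y^n,\delta)\le 2\varepsilon$ there.

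\textbf{Step 2: small gaps are unlikely (main obstacle).} The key claim is that $\sup_n\Prob^n(\tau_{k+1}^n-\tau_k^n\le\delta,\ \tau_k^n<T)\to0$ as $\delta\to0$, uniformly in $k$. Condition 2 controls increments only at deterministic lags $\theta$ after a stopping time, whereas the gap $\tau_{k+1}-\tau_k$ is random. I would use Aldous' randomization trick. Let $\theta$ be uniform on $[0,2\delta]$, independent of everything (enlarge the space). With $\sigma=\tau_k^n$ and $\rho=\tau_{k+1}^n$, on $\{\rho-\sigma\le\delta\}$ we have $|Y_\rho-Y_\sigma|\ge\varepsilon$. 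Hence either $|Y_{\sigma+\theta}-Y_\sigma|>\varepsilon/2$ or $|Y_{\sigma+\theta}-Y_\rho|>\varepsilon/2$. Since $\rho\le\sigma+\theta$ with probability at least $1/2$, and $\sigma+\theta-\rho\in[0,2\delta]$, applying Condition 2 at the stopping times $\sigma$ and $\rho$ gives
\[
\Prob(\rho-\sigma\le\delta)\le 4\sup_{\tau}\sup_{\theta\le2\delta}\Prob\bigl(|Y_{\tau+\theta}-Y_\tau|>\varepsilon/2\bigr).
\]
To justify the $\rho$ case, I would condition on $\mathcal F_\rho$, using that the lag is $\mathcal F_\rho$-measurable. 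The cleaner route is the reformulation: Condition 2 implies the equivalent form in which $\theta$ may be any $\mathcal F_{\tau}$-measurable random variable in $[0,\delta]$. I expect this technical reduction to be the hardest point.

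\textbf{Step 3: bounding the number of steps and concluding.} Let $\beta=\sup_n\sup_\tau\sup_{\theta\le\delta_0}\Prob(\ldots)$. Step 2 yields $\E[e^{-(\tau_{k+1}-\tau_k)}\mid\mathcal F_{\tau_k}]\le \beta+e^{-\delta_0}<1$ uniformly, for suitable $\delta_0$. Iterating gives $\Prob(\tau_k^n<T)\le e^T(\beta+e^{-\delta_0})^k$, so for some $k_0$ the probability of needing more than $k_0$ steps is below $\eta/2$. Then a union bound over $k\le k_0$ of Step 2's estimate, with $\delta$ small, gives probability below $\eta/2$ for a short gap. Together with Step 1, this gives $\sup_n\Prob^n(w'(Y^n,\delta)>2\varepsilon)\le\eta$. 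Combined with Condition 1 and Billingsley's theorem, this yields tightness in $D([0,T],\R)$.
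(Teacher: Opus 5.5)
The paper gives no proof of this lemma; it is quoted from Aldous (1978), so your argument has to stand on its own. Its skeleton is the standard one: Billingsley's Theorem 13.2, the $\varepsilon$-oscillation times, the uniform-lag randomization, and a two-scale choice $\delta\ll\delta_0$. The inequality in Step 2 is correct for \emph{genuine} moves, i.e. when $|Y_{\tau_{k+1}}-Y_{\tau_k}|\ge\varepsilon$.

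Its justification needs neither conditioning on $\cF_\rho$ nor a random-lag reformulation. The random-lag form would itself require the same randomization argument, so invoking it is circular. Instead, integrate out $\theta$ by Fubini. With $\lambda=\theta-(\rho-\sigma)$, the conditional law of $\lambda$ on $\{\lambda\ge0\}$ has density $1/(2\delta)$ on a subinterval of $[0,2\delta]$, so
\[
\Prob\bigl(|Y_{\rho+\lambda}-Y_\rho|\ge\varepsilon/2,\ \lambda\ge0\bigr)\le\frac{1}{2\delta}\int_0^{2\delta}\Prob\bigl(|Y_{\rho+s}-Y_\rho|\ge\varepsilon/2\bigr)\,ds ,
\]
and Condition 2 applies at each deterministic lag $s$. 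Two other steps, however, fail.

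\textbf{Step 3.} Step 2 and Condition 2 give only unconditional bounds, so nothing yields the uniform conditional bound $\E[e^{-(\tau_{k+1}-\tau_k)}\mid\cF_{\tau_k}]\le\beta+e^{-\delta_0}$. Localizing with $\tau_k\mathbf{1}_A+T\mathbf{1}_{A^c}$, $A\in\cF_{\tau_k}$, gives $\Prob(A\cap\{\text{short gap}\})\le 4\beta$, not $4\beta\,\Prob(A)$. The claim is false in general:
\begin{itemize}
\item Take $A_n\in\cF_0$ with $\Prob(A_n)=1/n$, and let $Y^n$ be a rate-$n^2$ Poisson process on $A_n$ and $0$ off $A_n$. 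Both hypotheses hold; for instance the increment probability is at most $\frac1n(1-e^{-n^2\theta})\le\min(1/n,n\theta)\le\sqrt\theta$. Yet on $A_n$ you have $\E[e^{-\tau_1}\mid\cF_0]\ge n^2/(n^2+1)\to1$.
\item Even for a single rate-$10$ Poisson process with $\delta_0\le T$, you get $\beta\ge1-e^{-10\delta_0}>1-e^{-\delta_0}$. So $\beta+e^{-\delta_0}<1$ can never be arranged.
\end{itemize}
Replace the geometric iteration by an unconditional count. On $\{\tau_k<T\}$ the first $k$ gaps are all genuine and sum to less than $T$, so more than $k-T/\delta_0$ of them are $\le\delta_0$. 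Markov's inequality then gives $\Prob(\tau_k<T)\le k\eta_0/(k-T/\delta_0)\le 2\eta_0$ for $k=\lceil 2T/\delta_0\rceil$, where $\eta_0$ is the Step 2 bound at scale $\delta_0$. Your union bound over $j<k$ with $\delta\ll\delta_0$ then finishes.

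\textbf{The last interval.} In $D([0,T],\R)$, the modulus $w'$ of Theorem 13.2 is taken over partitions all of whose intervals, including the last one $[t_{v-1},T)$, are longer than $\delta$. The convention of exempting the last interval belongs to $D[0,\infty)$ and is wrong here. For example, $\mathbf{1}_{[T-1/n,T]}$ has zero modulus under that convention but is not tight in $D([0,T],\R)$.

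Correspondingly, your Step 2 claim fails for the capped final gap. For $Y^n_t=(1+1/n)t$ with $T=1$ and $\varepsilon=1/2$, one gets $\tau^n_2=n/(n+1)$ and $\tau^n_3=1$, so short capped gaps occur with probability one. Yet this family trivially satisfies both hypotheses.

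The fix must use the $\wedge T$ convention in Condition 2:
\begin{itemize}
\item Apply the same randomization to the stopping times $T-2\delta$ and $\inf\{t\ge T-2\delta:|Y_t-Y_{T-2\delta}|>\varepsilon\}\wedge T$. This shows that $\sup_{t\in[T-2\delta,T]}|Y_t-Y_{T-2\delta}|\le\varepsilon$ with high probability.
\item On that event, suppose all genuine gaps exceed $2\delta$ and the last genuine move $\tau_j$ lies in $[T-\delta,T)$. Merge $[\tau_{j-1},\tau_j)$ with $[\tau_j,T)$. This gives a final interval longer than $2\delta$ with oscillation at most $4\varepsilon$.
\end{itemize}
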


\begin{lemma}[One-dimensional Skorokhod map \cite{Skorokhod1961}]\label{lem:skorokhod-map}
Let $Y\in D([0,T],\R)$. Define
\[
\Lambda(Y)_t=\sup_{0\le s\le t}(-Y_s)^+,
\quad t\in[0,T],
\]
and set $\Gamma(Y)_t=Y_t+\Lambda(Y)_t$. Then $X=\Gamma(Y)$ and $K=\Lambda(Y)$ constitute the one-dimensional Skorokhod reflection solution of $Y$ on the domain $[0,\infty)$. That is,
\[
X_t=Y_t+K_t,
\quad X_t\ge0,
\quad K_0=0,
\]
$K$ is a nondecreasing càdlàg process and satisfies the minimal reflection condition
\[
\int_0^T\1_{\{X_s>0\}}\,dK_s=0.
\]
Moreover, the map $Y\mapsto(\Gamma(Y),\Lambda(Y))$ is continuous in the Skorokhod space. In particular, if
\[
Y^n\xrightarrow[]{n\to\infty} Y\quad\text{in }D([0,T],\R),
\]
then $\Gamma(Y^n)\xrightarrow[]{n\to\infty}\Gamma(Y)$ and $\Lambda(Y^n)\xrightarrow[]{n\to\infty}\Lambda(Y)$ in the corresponding Skorokhod spaces.

Furthermore, under the uniform norm there is a Lipschitz-type estimate: there exists a constant $C>0$ such that, for all $Y^1,Y^2\in D([0,T],\R)$,
\[
\sup_{0\le t\le T}|\Gamma(Y^1)_t-\Gamma(Y^2)_t|
+\sup_{0\le t\le T}|\Lambda(Y^1)_t-\Lambda(Y^2)_t|
\le C\sup_{0\le t\le T}|Y_t^1-Y_t^2|.
\]
\end{lemma}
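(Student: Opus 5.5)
This is the classical one-dimensional Skorokhod map lemma. I would prove it by direct verification from the explicit formula $\Lambda(Y)_t=\sup_{0\le s\le t}(-Y_s)^+$.

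\textbf{Properties of the solution.} For $Y$ càdlàg, the running supremum of the càdlàg function $(-Y)^+$ is again càdlàg and nondecreasing.
\begin{itemize}
\item Right-continuity follows from the right-continuity of $Y$ at $t$ together with the monotone structure.
\item Left limits exist by monotonicity.
\item We have $\Lambda(Y)_0=(-Y_0)^+$. This equals $0$ because in our applications $Y_0=x\ge0$; this is a hypothesis I would state explicitly, since the definition requires $K_0=0$.
\item Nonnegativity holds because $\Gamma(Y)_t=Y_t+\sup_{s\le t}(-Y_s)^+\ge Y_t+(-Y_t)^+\ge0$.
\end{itemize}

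\textbf{Minimality.} Suppose $\Gamma(Y)_t>0$. Then $\Lambda(Y)_t>-Y_t$, so the supremum is strictly larger than the current value $(-Y_t)^+$, which is either $-Y_t$ or $0<\Lambda(Y)_t$. By right-continuity of $Y$, $(-Y_s)^+<\Lambda(Y)_t$ on a right neighbourhood of $t$, so $\Lambda$ is constant there.

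This shows $K$ does not increase on right neighbourhoods of points where $X>0$. To conclude $\int\1_{\{X_s>0\}}dK_s=0$, I would cover the open-in-the-right-topology set $\{X>0\}$ by countably many intervals $[t,t+\epsilon_t)$ on which $K$ is constant. Jumps of $K$ need separate treatment: a jump of $K$ at $t$ forces $Y_t=-K_t$, hence $X_t=0$. So the $dK$-mass sits on $\{X=0\}$.

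\textbf{Lipschitz estimate.} Use $|\sup_s a_s-\sup_s b_s|\le\sup_s|a_s-b_s|$ and the $1$-Lipschitz property of $y\mapsto(-y)^+$. This gives
\[\sup_t|\Lambda(Y^1)_t-\Lambda(Y^2)_t|\le\sup_t|Y^1_t-Y^2_t|,\]
and hence the estimate with $C=3$, since $\Gamma=Y+\Lambda$.

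\textbf{Continuity in the Skorokhod $J_1$ topology.} This is the main obstacle, since addition is not $J_1$-continuous in general. Here, however, $\Lambda$ commutes with time changes: for an increasing homeomorphism $\lambda$ of $[0,T]$, $\Lambda(Y\circ\lambda)=\Lambda(Y)\circ\lambda$, because running suprema are invariant under reparametrisation.

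Suppose $Y^n\to Y$ in $J_1$, and choose $\lambda_n$ with $\|\lambda_n-\mathrm{id}\|_\infty\to0$ and $\|Y^n\circ\lambda_n-Y\|_\infty\to0$. The uniform Lipschitz bound then gives
\[\|\Lambda(Y^n)\circ\lambda_n-\Lambda(Y)\|_\infty\to0 \quad\text{and}\quad \|\Gamma(Y^n)\circ\lambda_n-\Gamma(Y)\|_\infty\to0.\]
Using the same $\lambda_n$ for both components yields joint $J_1$ convergence of $(\Gamma(Y^n),\Lambda(Y^n))$.
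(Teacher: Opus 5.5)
The paper gives no proof of this lemma. It is placed in the appendix as a classical auxiliary result and attributed to Skorokhod (1961). Your argument is therefore a genuinely different, self-contained route, and it is correct in substance.

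Compared with the bare citation, it buys three things:
\begin{itemize}
\item an explicit constant ($C=3$) in the uniform Lipschitz bound, which is the form of the lemma the paper actually invokes in its moment estimates and in the approximation by strict controls;
\item \emph{joint} $J_1$ convergence of $(\Gamma(Y^n),\Lambda(Y^n))$ under a single time change, via $\Lambda(Y\circ\lambda)=\Lambda(Y)\circ\lambda$, which is stronger than the product-space convergence the tightness argument needs;
\item the observation that $\Lambda(Y)_0=(-Y_0)^+$.
\end{itemize}
The last point shows the statement as written is false for $Y\in D([0,T],\R)$ with $Y_0<0$, since then $K_0=-Y_0>0$. Your extra hypothesis $Y_0\ge0$ is therefore necessary rather than cosmetic. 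It is harmless for the paper, where every input process starts at $x\ge0$.

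\textbf{A slip in the minimality step.} When $X_t>0$ you may well have $\Lambda(Y)_t=0$, for instance if $Y$ has stayed nonnegative on $[0,t]$ with $Y_t>0$. In that case neither ``$0<\Lambda(Y)_t$'' nor the strict bound $(-Y_s)^+<\Lambda(Y)_t$ near $t$ holds. Use the non-strict version instead. From $-Y_t<\Lambda(Y)_t$ and right-continuity, $-Y_s<\Lambda(Y)_t$ on some $[t,t+\epsilon_t)$. Since also $\Lambda(Y)_t\ge0$, you get $(-Y_s)^+\le\Lambda(Y)_t$ there, which is all that constancy of $\Lambda(Y)$ on $[t,t+\epsilon_t)$ requires.

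\textbf{The countable cover needs a justification.} One option is the hereditary Lindel\"of property of the Sorgenfrey line. A direct argument also works:
\begin{itemize}
\item $K$ is constant on each connected component of the open set $U=\bigcup_{t:\,X_t>0}(t,t+\epsilon_t)$, by compactness and chaining of overlapping intervals, so $dK(U)=0$.
\item Points of $\{X>0\}\setminus U$ have pairwise disjoint intervals $(t,t+\epsilon_t)$, hence there are only countably many of them.
\item Your no-jump observation ($\Delta K_t>0\Rightarrow X_t=0$) shows $dK$ charges none of these points.
\end{itemize}
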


\begin{lemma}[Chattering lemma \cite{ElKarouiNguyenJeanblanc1987}]
Let $A$ be a compact metric space, and let \(\mathcal V\) be the relaxed control space endowed with the stable topology. For every relaxed control
\[
q(dt,da)=q_t(da)dt\in \mathcal V,
\]
there exists a sequence of $A$-valued measurable processes $(u_t^n)_{t\in[0,T]}$ such that the corresponding strict control measures
\[
q^n(dt,da)=\delta_{u_t^n}(da)dt
\]
converge to $q(dt,da)$ in the stable topology, i.e., $q^n\xrightarrow[]{n\to\infty}q$ in \(\mathcal V\). Equivalently, for every bounded continuous function
\[
\varphi:[0,T]\times A\to\R,
\]
one has
\[
\int_0^T\int_A\varphi(t,a)\delta_{u_t^n}(da)dt
\xrightarrow[]{n\to\infty}
\int_0^T\int_A\varphi(t,a)q_t(da)dt.
\]
If $q$ is an adapted random relaxed control, then the processes $u^n$ can be chosen to be adapted.

Furthermore, for every bounded continuous function $\varphi:[0,T]\times A\to\R$,
\[
\sup_{0\le r\le T}\left|\int_0^r\varphi(t,u_t^n)dt-
\int_0^r\int_A\varphi(t,a)q_t(da)dt\right|\xrightarrow[]{n\to\infty}0.
\]
\end{lemma}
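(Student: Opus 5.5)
The plan is to construct $u^n$ explicitly by a three-stage discretization (time, control space, chattering), first for a deterministic $q$, then to make every step measurable and adapted in $\omega$. Fix $n$, the dyadic time grid $t_k=kT/2^n$, and a finite Borel partition $A=\bigcup_{j=1}^{m_n}A_j^n$ into sets of diameter at most $2^{-n}$, with a chosen point $a_j^n\in A_j^n$. By compactness of $A$ and uniform continuity of $\varphi$ on $[0,T]\times A$, it suffices to test against a countable convergence-determining family of bounded continuous $\varphi$. For such $\varphi$, replacing $a$ by $a_j^n$ on $A_j^n$ costs at most the modulus of continuity $w_\varphi(2^{-n})$ times $T$.

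\textbf{Step 1 (time averaging with a delay).} On $[t_k,t_{k+1})$ I would replace $q_t$ by the average of $q$ over the \emph{previous} interval,
\[
\bar q^{\,n}_k(da)=\frac{1}{t_k-t_{k-1}}\int_{t_{k-1}}^{t_k}q_s(da)\,ds,\qquad k\ge1,
\]
and use an arbitrary fixed $\delta_{a_0}$ on $[0,t_1)$. The one-step delay makes $\bar q^{\,n}_k$ $\cF_{t_k}$-measurable. The error $\int_0^T\!\int_A\varphi(t,a)(q_t-\bar q^{\,n}_t)(da)\,dt\to0$ follows from $L^1$-continuity of translations. Concretely, I would approximate $t\mapsto\int_A\varphi(t,a)q_t(da)$ in $L^1([0,T])$ by a continuous function, for which both the averaging error and the shift error vanish.

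\textbf{Step 2 (chattering).} Set $p_{k,j}=\bar q^{\,n}_k(A_j^n)$, which is $\cF_{t_k}$-measurable. Split $[t_k,t_{k+1})$ into consecutive subintervals $I_{k,j}$ of lengths $p_{k,j}(t_{k+1}-t_k)$, whose endpoints are $\cF_{t_k}$-measurable random times. Define $u^n_t=a_j^n$ for $t\in I_{k,j}$. Then $u^n$ is adapted (indeed predictable), since on $[t_k,t_{k+1})$ it is a measurable function of $t$ and of $\cF_{t_k}$-measurable quantities. On each grid interval,
\[
\int_{t_k}^{t_{k+1}}\varphi(t,u^n_t)\,dt-\int_{t_k}^{t_{k+1}}\!\int_A\varphi(t,a)\bar q^{\,n}_k(da)\,dt
\]
is bounded by $(t_{k+1}-t_k)\bigl(w_\varphi(2^{-n})+w_\varphi(T2^{-n})\bigr)$, using uniform continuity in $t$ and in $a$. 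Summing over $k$ gives convergence for every $\varphi$, pathwise in $\omega$. This yields $q^n\to q$ in $\mathcal V$ a.s., since the stable topology on the compact set $\mathcal V$ is determined by such test functions.

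\textbf{Step 3 (uniform in $r$).} For fixed $\varphi$, the functions $F_n(r)=\int_0^r\varphi(t,u^n_t)\,dt-\int_0^r\!\int_A\varphi\,q_t(da)\,dt$ are $2\|\varphi\|_\infty$-Lipschitz in $r$. They also converge to $0$ for each $r$, by applying the above with $\varphi\1_{[0,r]}$ and approximating the indicator by continuous cutoffs with error $O(\|\varphi\|_\infty\eta)$. Equicontinuity together with pointwise convergence on the compact interval $[0,T]$ gives uniform convergence.

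I expect the main obstacle to be Step 1 combined with measurability. A naive choice (averaging $q$ over the current interval) destroys adaptedness, and measurable selection of atoms in $\omega$ must be handled. The delayed average together with the fixed deterministic partition $\{A_j^n\}$ avoids any selection theorem, so I would try this first. The remaining care is the $L^1$ shift argument, which must hold pathwise, uniformly enough to apply to each $\varphi$ in the countable family.
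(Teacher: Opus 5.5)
Your proof is correct. The paper itself contains no proof of this lemma to compare against: it is imported in the appendix from \cite{ElKarouiNguyenJeanblanc1987}. What you give is a self-contained version of the classical discretize--delay--chatter construction. Compared with the bare citation, it gives explicit error bounds for each $\omega$ and makes $u^n$ adapted without any measurable-selection theorem. The only measurability input is that $\int_{t_{k-1}}^{t_k}q_s(A_j^n)\,ds$ is $\mathcal F_{t_k}$-measurable, which holds because $q$ is predictable in the paper's definition of admissible relaxed controls.

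Two places can be tightened.

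\emph{Step 1.} The quantity $\int_A\varphi(t,a)\bar q^{\,n}_t(da)=\Delta^{-1}\int_{t_{k-1}}^{t_k}\int_A\varphi(t,a)q_s(da)\,ds$ (with $\Delta=T2^{-n}$) is not a function of $h(t)=\int_A\varphi(t,a)q_t(da)$ alone. You first have to replace $\varphi(t,a)$ by $\varphi(s,a)$, where $|t-s|\le 2\Delta$, at cost $T\,w_\varphi(2\Delta)$. After that, equal block lengths give the exact identity
\[
\sum_{k\ge1}\int_{t_k}^{t_{k+1}}\frac1\Delta\int_{t_{k-1}}^{t_k}h(s)\,ds\,dt=\int_0^{T-\Delta}h(s)\,ds .
\]
Hence the Step 1 error is at most $2\|\varphi\|_\infty\Delta+T\,w_\varphi(2\Delta)$, uniformly in $\omega$. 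No $L^1$-translation argument is needed, and the obstacle you anticipate does not arise for continuous $\varphi$.

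\emph{Step 3.} This can be done directly, since the blockwise bounds of Steps 1--2 hold up to any $r$ with an extra boundary error $O(\|\varphi\|_\infty\Delta)$. Your equi-Lipschitz argument is also valid.

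Your $L^1$ idea does pay off beyond the statement. By density of continuous functions in $L^1([0,T];C(A))$, it upgrades the convergence, for each fixed $\omega$, to bounded Carath\'eodory integrands, i.e.\ to genuine stable convergence. That is what Theorem~\ref{thm:strict-approximation} actually uses: there the lemma is applied to the random, merely measurable-in-$t$ integrand $b(t,X_t(\omega),\cL_{X_t},a)$, which is bounded for each fixed $\omega$ because $X$ has c\`adl\`ag paths. The lemma as stated, with deterministic continuous $\varphi$, does not literally cover that case.
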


\begin{remark}
In this paper, Prokhorov's theorem is used to guarantee tightness of the relaxed control set, and lower semicontinuity is used to ensure that the cost functional does not jump downward under limiting measures, thereby guaranteeing the existence of an optimal relaxed control.
\end{remark}
\end{document}